\def \p{\partial}
\def \D{\mathbb{D}}
\def \DK{\mathbb{D}_K}
\def \M{\mathbb{M}}
\def \MK{\mathbb{M}_K}
\def \Th{\mathcal{T}_h}
\newcommand{\V}[1]{\mbox{\boldmath $ #1 $}}
\newcommand{\bey}{\begin{eqnarray}}
\newcommand{\eey}{\end{eqnarray}}
\newcommand{\beq}{\begin{equation}}
\newcommand{\eeq}{\end{equation}}
\theoremstyle{plain}% default
\newtheorem{thm}{\hspace{6mm}\textbf{ Theorem}}[section]
\theoremstyle{definition}
\theoremstyle{remark}
\newtheorem{exam}{\hspace{6mm}\textbf{ Example}}[section]
\begin{document}

\date{}
\title{Anisotropic mesh adaptation for 3D anisotropic diffusion problems with application to fractured
reservoir simulation%
\thanks{The work was supported in part by the National Science Foundation (USA) under Grant 1115118.
%The computation was performed on the HPC resources at the University of Missouri Bioinformatics Consortium (UMBC).
}
}
\author{Xianping Li%
\thanks{Department of Mathematics and Statistics, the University of Missouri-Kansas City, Kansas City, MO 64110,
U.S.A. (\textit{lixianp@umkc.edu})}
\and Weizhang Huang%
\thanks{Department of Mathematics, the University of Kansas, Lawrence, KS 66045,
U.S.A. (\textit{whuang@ku.edu})}
}
\maketitle

\vspace{10pt}

\begin{abstract}
Anisotropic mesh adaptation is studied for linear finite element solution of 3D anisotropic diffusion problems.
The $\M$-uniform mesh approach is used, where an anisotropic adaptive mesh is generated as a uniform one
in the metric specified by a tensor. In addition to mesh adaptation,
preservation of the maximum principle is also studied. 
Some new sufficient conditions for maximum principle preservation are developed,
and a mesh quality measure is defined to server as a good indicator.
Four different metric tensors are investigated:
one is the identity matrix,  one focuses on minimizing an error bound, another one on preservation
of the maximum principle, while the fourth combines both. Numerical examples show that these
metric tensors serve their purposes. Particularly, the fourth leads to meshes that improve
the satisfaction of the maximum principle by the finite element solution while concentrating elements
in regions where the error is large. 
Application of the anisotropic mesh adaptation to
fractured reservoir simulation in petroleum engineering is also investigated, 
 where unphysical solutions can occur and mesh adaptation can help improving the
satisfaction of the maximum principle.
\end{abstract}

\noindent
\textbf{ AMS 2010 Mathematics Subject Classification.} 65M60, 65M50

\noindent
\textbf{ Key words.} {finite element method, anisotropic mesh adaptation, anisotropic diffusion, discrete maximum principle, petroleum engineering}

\vspace{10pt}

% section 1
\section{Introduction}
\label{Sec-intro}

We are concerned with the linear finite element solution of the three dimensional boundary value problem (BVP) of the diffusion equation
\beq
\label{bvp-pde}
 - \nabla \cdot (\D \, \nabla u )  =  f,  \quad \mbox{ in } \quad \Omega
\eeq
subject to the Dirichlet boundary condition
\beq
\label{bvp-bc}
u = g,  \quad \mbox{ on } \quad \partial \Omega 
\eeq
where $\Omega$ is a bounded polyhedral domain,
$f$ and $g$ are given functions, and $\D$ is the diffusion matrix.
We assume that $\D = \D(\V{x}) $ is a symmetric and uniformly positive
definite matrix-valued function on $\Omega$. It includes both isotropic and anisotropic diffusion
as special examples. In the former case, $\D$ takes the form $\D = \alpha(\V{x}) I$,
where $I$ is the $3 \times 3$ identity matrix and $\alpha = \alpha(\V{x})$ is a scalar function.
In the latter case, on the other hand, $\D$ has not-all-equal eigenvalues at least
on a portion of $\Omega$.  

Anisotropic diffusion problems arise from various branches of science and engineering, including plasma physics
\cite{GL09, GLT07,  GYKL05, NW00, SH07}, petroleum engineering \cite{ABBM98a,ABBM98b,CSW95,MD06},
and image processing \cite{CS00, CSV03, KM09, PM90, Wei98}. When a conventional numerical method
is used to solve the problems, spurious oscillations may occur in computed solutions.
Numerous research has been done for two dimensional (2D) problems; among other works, we mention a few here,
\cite{BKK08,CR73,DDS04,GYKL05,HKL10-2,KKK07,KSS09,LH10,LH13,LSS07,LSSV07,LS08,LePot09,WaZh11,XZ99,YuSh2008,ZZS2013}.
A common approach is to design a proper discretization method and/or a proper mesh so that the numerical solution satisfies the maximum principle (MP). Recently, an anisotropic non-obtuse angle condition was developed in
\cite{Hua11,LH10,LH13,LHQ14}
for the linear finite element solution of both time independent and time dependent anisotropic
diffusion problems to satisfy MP.

On the other hand, much less work has been done for 3D anisotropic diffusion problems. 
Although MP preservation has been studied in general dimensions e.g. in
\cite{BKK08, CR73, GYKL05, KKK07,KSS09, KL95, LH10, LH13}, most of them either consider 
isotropic diffusion or present numerical examples only in 1D and 2D.
For example, only isotropic diffusion is considered in \cite{CR73,KL95}.
It is shown in \cite{Let92} that the 3D Delaunay triangulation does not generally produce
a mesh with which the numerical solution satisfies MP. Mesh conditions are studied in \cite{BKK08}
for a reaction-isotropic-diffusion problem for general dimensions and numerical examples in 1D and 2D are presented.
The difficulty of MP satisfaction for 3D problems is remarked in both \cite{BKK08} and \cite{Let92}.

The objective of this paper is to study the linear finite element solution of 3D anisotropic diffusion problems.
The focus will be on MP preservation and mesh adaptation. Four different metric tensors used in anisotropic mesh
generation will be considered. This study is a 3D extension of the work \cite{LH10}. 
Moreover, new sufficient conditions will be developed for the linear finite element approximation to satisfies MP, and a mesh quality measure
will be defined to provide a useful indication for MP satisfaction.
The mesh quality measure is developed along the approach of \cite{Hua05c}.
But the interested reader is also referred to \cite{Knu03, Knu12} and references therein for
different mesh optimization methods and quality metrics.
Furthermore, the application to fractured reservoir simulation in petroleum engineering will also be investigated,
where unphysical solutions can occur and mesh adaptation can help improving MP satisfaction.

An outline of the paper is given as follows. The linear finite element formulation for BVP (\ref{bvp-pde}) and
(\ref{bvp-bc}) is given in Section~\ref{sec-FEM}. MP preservation and some sufficient conditions will be discussed.
Section~\ref{sec-ANAC} contains the discussion of anisotropic mesh adaptation and four metric tensors.
Numerical examples are given in Section~\ref{sec-exam}, followed by the investigation of application
of anisotropic mesh adaptation to fractured reservoir simulation in Section~\ref{sec-app}.
Conclusions are drawn in Section~\ref{sec-con}.

\section{Finite Element Formulation}
\label{sec-FEM}

In this section we briefly describe the piecewise linear finite element discretization for
BVP (\ref{bvp-pde}) and (\ref{bvp-bc}) and state a few properties of the discretization.

Let $U_g = \{ v \in H^1(\Omega) \; | \: v|_{\p \Omega} = g\}$. The weak formulation of
BVP (\ref{bvp-pde}) and (\ref{bvp-bc}) is to find $u \in U_g$ such that
\[
\int_\Omega (\nabla v)^T \D \nabla u \, d \V{x} = \int_\Omega f v \, d \V{x}, \quad \forall v \in U_0 .
\]
For the finite element discretization, we assume that a tetrahedral mesh $\Th$ is given
on $\Omega$. Let $g^h$ be the piecewise linear interpolation of $g$ on the boundary $\partial \Omega$
and $U_{g^h}^h$ be the piecewise linear
finite element space on $\Th$ with the boundary data $g^h$. Then, the finite element formulation
for BVP (\ref{bvp-pde}) and (\ref{bvp-bc})  is to find $u^h \in U_{g^h}^h$ such that
\beq
\label{fem-form}
\int_{\Omega} (\nabla v^h)^T \; \D \nabla u^h d\V{x} =
 \int_{\Omega} f \, v^h d\V{x}, \quad \forall v^h \in U_0^h .
\eeq
This discretization is standard and it is expected that $u^h$ converges to $u$
at a rate of second order in the $L^2$ norm and first order in $H^1$ norm. We take the reference element $\hat K$ to be a unitary equilateral  tetrahedron and denote an element in the mesh $\Th$ by $K$. Moreover, 
denote by $\hat{\V{q}}_i$ ($i=1, ..., 4$) the unit inward normal to the face facing the $i^{\text{th}}$ vertex
of $\hat K$. Let $F_K$ be the affine mapping from $\hat K$ to $K$ and $F_K'$ the Jacobian matrix of $F_K$.

\begin{thm}[Li and Huang  \cite{LH10}]
\label{thm-mesh-map}
If the mesh satisfies
\beq
\label{meshcnd-2}
\hat{\V{q}}_i^T (F_K')^{-1} \DK (F_K')^{-T} \hat{\V{q}}_j \le 0,
\quad \forall i \ne j,\; i,j=1,...,4,\; \forall K \in \Th
\eeq
where $\DK$ is the average of $\D$ over $K$,
then the linear finite element scheme (\ref{fem-form}) for solving BVP (\ref{bvp-pde}) and (\ref{bvp-bc})
satisfies the maximum principle,
\[
f \le 0 \text{ in }\Omega \quad \Longrightarrow \quad
\max_{\V{x} \in \Omega \cup \partial \Omega} u^h(\V{x}) = \max_{\V{x} \in \partial \Omega} u^h(\V{x}) .
\]
\end{thm}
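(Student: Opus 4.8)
The plan is to reduce the maximum principle to an algebraic property of the assembled stiffness matrix, namely that it has the sign pattern and diagonal dominance of an M-matrix, and to observe that the mesh condition (\ref{meshcnd-2}) is precisely the statement that its off-diagonal entries are nonpositive. First I would write the finite element equations (\ref{fem-form}) as a linear system $A \V{u} = \V{b}$, where $A$ is the global stiffness matrix indexed by the mesh vertices, $b_i = \int_\Omega f \phi_i \, d\V{x}$, and the $\phi_i$ are the piecewise linear nodal basis functions. The entries of $A$ are assembled from element contributions $A_{ij}^K = \int_K (\nabla \phi_i)^T \DK \nabla \phi_j \, d\V{x}$; since the basis functions are linear their gradients are constant on each $K$, and since $\D$ is replaced by its element average $\DK$, this reduces to $A_{ij}^K = |K| \, (\nabla \phi_i)^T \DK \nabla \phi_j$.

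Next I would transform each element quantity to the reference element $\hat K$ via $F_K$. By the chain rule $\nabla \phi_i = (F_K')^{-T} \hat{\nabla} \hat{\phi}_i$, and because $\hat{\phi}_i$ is the barycentric coordinate of the equilateral $\hat K$ its constant gradient is a fixed positive multiple of the inward normal, $\hat{\nabla}\hat{\phi}_i = c\, \hat{\V{q}}_i$ with $c > 0$. Substituting gives $A_{ij}^K = |K| \, c^2 \, \hat{\V{q}}_i^T (F_K')^{-1} \DK (F_K')^{-T} \hat{\V{q}}_j$, so the mesh condition (\ref{meshcnd-2}) is exactly the requirement that every off-diagonal element entry satisfies $A_{ij}^K \le 0$; after assembly, $a_{ij} = \sum_K A_{ij}^K \le 0$ for all $i \ne j$. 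The partition-of-unity identity $\sum_i \phi_i \equiv 1$ yields $\sum_i \nabla \phi_i = 0$ on each $K$, hence every element matrix has vanishing row sums and the assembled matrix satisfies $\sum_j a_{ij} = 0$. Combined with positive definiteness of $\DK$, which forces each diagonal entry $a_{ii} = \sum_{K} |K|\, c^2\, \hat{\V{q}}_i^T (F_K')^{-1}\DK (F_K')^{-T} \hat{\V{q}}_i$ to be strictly positive, this shows $A$ is weakly diagonally dominant with nonpositive off-diagonals.

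Finally I would invoke the standard discrete maximum principle argument. Splitting the vertices into interior and boundary sets and moving the prescribed boundary values of $g^h$ to the right-hand side, the interior submatrix $A_{II}$ inherits nonpositive off-diagonals and, from the zero global row sums together with the nonpositive interior–boundary couplings, nonnegative row sums that are strictly positive for vertices adjacent to the boundary. On a connected mesh $A_{II}$ is then irreducibly diagonally dominant with nonpositive off-diagonal entries, hence an M-matrix with $A_{II}^{-1} \ge 0$. When $f \le 0$ in $\Omega$ the interior load satisfies $b_i \le 0$, and monotonicity of $A_{II}^{-1}$ controls the interior maximum of $u^h$ by the boundary data. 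Equivalently, if the maximum were attained at an interior vertex $i$, then using $a_{ij}\le 0$, $\sum_j a_{ij}=0$, $b_i \le 0$, and $u_j \le u_i$ one obtains a chain of equalities forcing $u_j = u_i$ at every neighbor, which propagates along edges to the boundary.

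The hardest part will be this last step: the sign and row-sum conditions alone only give weak diagonal dominance, so one must handle the interior/boundary coupling carefully and use the connectivity (irreducibility) of the mesh graph to rule out a strict interior maximum. By contrast, the computations identifying (\ref{meshcnd-2}) with the off-diagonal sign condition are routine once the reference-element geometry relating $\hat{\nabla}\hat{\phi}_i$ to $\hat{\V{q}}_i$ is fixed, so the essential content of the theorem lies in the M-matrix/monotonicity argument rather than in the element-level algebra.
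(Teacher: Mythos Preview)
The paper does not give its own proof of this theorem; it is quoted from \cite{LH10} and stated without argument. Your outline is the standard route and is essentially what the cited reference does: identify the off-diagonal element stiffness entries with $|K|\,c^2\,\hat{\V{q}}_i^T (F_K')^{-1}\DK (F_K')^{-T}\hat{\V{q}}_j$ (this is exact, not an approximation, because the piecewise linear gradients are constant on $K$ so $\int_K \D$ collapses to $|K|\,\DK$), deduce nonpositive off-diagonals and zero row sums from the partition of unity, and finish with the M-matrix/irreducible diagonal dominance argument for the interior block. Your identification of the delicate point---that the row sums are only \emph{weakly} dominant so connectivity of the mesh graph is needed to propagate an interior maximum to the boundary---is accurate; nothing is missing.
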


Preserving MP is crucial to avoiding artificial oscillations in the computed solution.
It is thus interesting to know what meshes satisfy the \textit{ anisotropic non-obtuse angle condition} (\ref{meshcnd-2}).
Obviously, a sufficient condition is
\beq
\label{meshcnd-3}
(F_K')^{-1} \DK (F_K')^{-T} = c_K I,\quad \forall K \in \Th
\eeq
where $c_K$ is a positive scalar constant on $K$ and $I$ is the $3 \times 3$ identity matrix.

A weaker condition is stated in the following theorem, where a general case for the $d$ space dimension
($d \ge 2$) is considered.

\begin{thm}
\label{lem:quality}
A sufficient condition for \eqref{meshcnd-2} is
\begin{equation}
\label{meshcnd-4}
\left \| \frac{(F_K')^{-1} \DK (F_K')^{-T}}{\det ( (F_K')^{-1} \DK (F_K')^{-T} )^{\frac{1}{d}} }
- I \right \| \le \frac{1}{d}
\end{equation}
or 
 \beq 
 \label{meshcnd-5}
 \frac{\|(F'_K)^{-1} \DK (F'_K)^{-T}\|}{\det \Big((F'_K)^{-1} \DK (F'_K)^{-T}\Big)^{\frac{1}{d}}} \leq \min
 \left (1+\frac{1}{d},\;  \Big(1-\frac{1}{d}\Big)^{-\frac{1}{d-1}}\right ) .
 \eeq
\end{thm}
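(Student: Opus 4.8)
The plan is to reduce everything to a single symmetric positive definite matrix and to one geometric identity about the reference simplex. Write $B = (F_K')^{-1} \DK (F_K')^{-T}$; since $\DK$ is symmetric positive definite and $F_K'$ is nonsingular, $B$ is symmetric positive definite, so $\|B\|$ (the spectral norm) equals its largest eigenvalue and $\det(B) > 0$. Set $\beta = \det(B)^{1/d} > 0$ and $\tilde B = B/\beta$, so that $\det \tilde B = 1$ and the left-hand side of (\ref{meshcnd-4}) is exactly $\|\tilde B - I\|$. The key geometric input is that for the regular reference simplex the unit inward face normals satisfy $\hat{\V{q}}_i^T \hat{\V{q}}_j = -\tfrac{1}{d}$ for $i \ne j$ (and $=1$ for $i=j$). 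I would establish this from the fact that equal face areas force $\sum_i \hat{\V{q}}_i = 0$; squaring this relation and using symmetry of the pairwise products gives $(d+1) + (d+1)d\,\hat{\V{q}}_i^T\hat{\V{q}}_j = 0$, hence the value $-1/d$.

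First I would prove that (\ref{meshcnd-4}) implies (\ref{meshcnd-2}). Writing $\tilde B = I + E$ with $E = \tilde B - I$, for $i \ne j$ we have
\[
\hat{\V{q}}_i^T B \hat{\V{q}}_j = \beta\left( \hat{\V{q}}_i^T \hat{\V{q}}_j + \hat{\V{q}}_i^T E \hat{\V{q}}_j\right) = \beta\left( -\tfrac{1}{d} + \hat{\V{q}}_i^T E \hat{\V{q}}_j \right).
\]
Because $\beta>0$, condition (\ref{meshcnd-2}) follows once $\hat{\V{q}}_i^T E \hat{\V{q}}_j \le \tfrac{1}{d}$. Since the $\hat{\V{q}}_i$ are unit vectors, Cauchy--Schwarz (equivalently, the definition of the spectral norm) gives $|\hat{\V{q}}_i^T E \hat{\V{q}}_j| \le \|E\|$, so the hypothesis $\|E\| = \|\tilde B - I\| \le 1/d$ of (\ref{meshcnd-4}) closes the argument.

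It then remains to show that (\ref{meshcnd-5}) implies (\ref{meshcnd-4}). Let $\lambda_1 \ge \cdots \ge \lambda_d > 0$ be the eigenvalues of $\tilde B$; since $\tilde B$ is symmetric positive definite with $\prod_i \lambda_i = 1$, we have $\|\tilde B\| = \lambda_1 \ge 1 \ge \lambda_d$ and $\|\tilde B - I\| = \max(\lambda_1 - 1,\, 1 - \lambda_d)$. Thus (\ref{meshcnd-4}) is equivalent to the two bounds $\lambda_1 \le 1 + 1/d$ and $\lambda_d \ge 1 - 1/d$. The first is immediate from $\|\tilde B\| = \lambda_1 \le 1 + 1/d$. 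For the second I would use the determinant constraint: since $\lambda_i \le \lambda_1$ for $i<d$, we get $\lambda_d = \big(\prod_{i<d}\lambda_i\big)^{-1} \ge \lambda_1^{-(d-1)}$, so $\lambda_1 \le (1-1/d)^{-1/(d-1)}$ forces $\lambda_d \ge 1-1/d$. Taking the minimum of the two upper bounds on $\lambda_1$ yields exactly the right-hand side of (\ref{meshcnd-5}).

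I expect the main technical point---rather than a genuine obstacle---to be the lower bound on the smallest eigenvalue: it is the normalization $\det\tilde B = 1$, combined with the crude but sharp estimate $\lambda_d \ge \lambda_1^{-(d-1)}$, that produces the unusual exponent $(1-1/d)^{-1/(d-1)}$ appearing in (\ref{meshcnd-5}). Everything else is a direct consequence of the geometric identity for the regular simplex and Cauchy--Schwarz; I would only need to verify at the outset that $B$ is genuinely symmetric positive definite so that the spectral-norm and eigenvalue identities used throughout are valid.
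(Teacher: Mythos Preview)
Your proposal is correct and follows essentially the same approach as the paper: both use the identity $\hat{\V{q}}_i^T\hat{\V{q}}_j=-1/d$ to split off a negative term and bound the remainder by the spectral norm, then pass to eigenvalues to show that (\ref{meshcnd-5}) implies (\ref{meshcnd-4}). Your normalization $\det\tilde B=1$ and the direct estimate $\lambda_d\ge\lambda_1^{-(d-1)}$ streamline the paper's computation (which routes the same inequality through the ratio $\lambda_1/\lambda_d$), but the underlying argument is identical.
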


\begin{proof}
For $i\neq j$, the left-hand side of (\ref{meshcnd-2}) can be rewritten as
\begin{align*}
&\hat{\V{q}}_i^T (F_K')^{-1} \DK (F_K')^{-T} \hat{\V{q}}_j
\\
= & \hat{\V{q}}_i^T \left [ \det ( (F_K')^{-1} \DK (F_K')^{-T} )^{\frac{1}{d}} I
 + (F_K')^{-1} \DK (F_K')^{-T}
-  \det ( (F_K')^{-1} \DK (F_K')^{-T} )^{\frac{1}{d}} I \right ] \hat{\V{q}}_j
\\
= & - \frac{1}{d}  \det ( (F_K')^{-1} \DK (F_K')^{-T} )^{\frac{1}{d}} 
+ \hat{\V{q}}_i^T \left [ (F_K')^{-1} \DK (F_K')^{-T}
- \det ( (F_K')^{-1} \DK (F_K')^{-T} )^{\frac{1}{d}}  I
\right ] \hat{\V{q}}_j ,
\end{align*}
where we have used $\hat{\V{q}}_i^T \hat{\V{q}}_j = - \frac{1}{d}$ for the equilateral simplex $\hat K$.
From this, we have
\begin{align*}
& \hat{\V{q}}_i^T (F_K')^{-1} \DK (F_K')^{-T} \hat{\V{q}}_j
\\
& \qquad \le - \frac{1}{d}  \det ( (F_K')^{-1} \DK (F_K')^{-T} )^{\frac{1}{d}} 
+ \| (F_K')^{-1} \DK (F_K')^{-T} - \det ( (F_K')^{-1} \DK (F_K')^{-T} )^{\frac{1}{d}}  I \| ,
\end{align*}
where $\| \cdot \|$ is the matrix 2-norm. Thus, a sufficient condition for (\ref{meshcnd-2}) is
\[
- \frac{1}{d}  \det ( (F_K')^{-1} \DK (F_K')^{-T} )^{\frac{1}{d}}
+ \| (F_K')^{-1} \DK (F_K')^{-T} - \det ( (F_K')^{-1} \DK (F_K')^{-T} )^{\frac{1}{d}}  I \| \le 0 .
\]
From this we can obtain (\ref{meshcnd-4}).

 Denote the eigenvalues of $(F'_K)^{-1} \DK (F'_K)^{-T}$ by $\lambda_1 \geq \cdots \geq \lambda_d > 0$. Then condition \eqref{meshcnd-5} is equivalent to
 \beq
 \label{meshcnd-6}
 \frac{\lambda_1}{(\lambda_1 \cdots \lambda_d)^{\frac{1}{d}}} \leq \min \Big\{1+\frac{1}{d}, \Big(1-\frac{1}{d}\Big)^{-\frac{1}{d-1}} \Big\}.
 \eeq
 Meanwhile, \eqref{meshcnd-4} is equivalent to
 \bey
 \nonumber
 && \left\Vert \frac{1}{(\lambda_1 \cdots \lambda_d)^{\frac{1}{d}}}
 \begin{bmatrix}
  \lambda_1 & & \\
  & \ddots & \\
  & & \lambda_d
 \end{bmatrix} 
 -
 \begin{bmatrix}
  1 & & \\
  & \ddots & \\
  & & 1
 \end{bmatrix}	 	 
 \right\Vert \leq \frac{1}{d} \\
 \nonumber
\Longleftrightarrow &&  \left| \frac{\lambda_i}{(\lambda_1 \cdots \lambda_d)^{\frac{1}{d}}}-1 \right| 
 \leq \frac{1}{d} \qquad i=1, \cdots, d \\
 \label{meshcnd-7}
\Longleftrightarrow &&  1-\frac{1}{d} \leq \frac{\lambda_d}{(\lambda_1 \cdots \lambda_d)^{\frac{1}{d}}} 
 \leq \frac{\lambda_1}{(\lambda_1 \cdots \lambda_d)^{\frac{1}{d}}} \leq 1+\frac{1}{d}.
 \eey
 Thus, \eqref{meshcnd-6}, or   \eqref{meshcnd-5}, implies the right inequality of \eqref{meshcnd-7}. 
 
 To prove the left inequality of \eqref{meshcnd-7}, we start from \eqref{meshcnd-6} and have
 \beq \nonumber
 \Big(1-\frac{1}{d}\Big)^{-\frac{1}{d-1}} \geq \frac{\lambda_1}{(\lambda_1 \cdots \lambda_d)^{\frac{1}{d}}} \geq \frac{\lambda_1}{\lambda_d^{\frac{1}{d}}\lambda_1^{\frac{d-1}{d}}} = \Big(\frac{\lambda_1}{\lambda_d}\Big)^{\frac{1}{d}},
 \eeq
 or 
 \beq
 \nonumber
 \Big(\frac{\lambda_d}{\lambda_1}\Big)^{\frac{1}{d}} \geq \Big(1-\frac{1}{d}\Big)^{\frac{1}{d-1}}.
 \eeq
 Using this, we have
 \beq
 \nonumber
 	\frac{\lambda_d}{(\lambda_1 \cdots \lambda_d)^{\frac{1}{d}}} \geq \frac{\lambda_d}{\lambda_d^{\frac{1}{d}}\lambda_1^{\frac{d-1}{d}}} = \Big(\frac{\lambda_d}{\lambda_1}\Big)^{\frac{d-1}{d}} \geq 1-\frac{1}{d}
 \eeq
 Thus, \eqref{meshcnd-5} implies the left inequality of \eqref{meshcnd-7}. 
\end{proof}

Notice that the bound in \eqref{meshcnd-5} has the value
\[
\min\Big\{1+\frac{1}{d}, \Big(1-\frac{1}{d}\Big)^{-\frac{1}{d-1}}\Big\} =
\begin{cases}
1.5,&\quad \text{ for 2D ($d=2$)}\\
\Big(\frac{3}{2}\Big)^{\frac{1}{2}} \approx 1.225 ,& \quad \text{ for 3D ($d=3$)}.
\end{cases}
\]

Moreover, (\ref{meshcnd-3}) is sufficient for (\ref{meshcnd-5}) because 
(\ref{meshcnd-3}) implies that all of the eigenvalues of $(F'_K)^{-1} \DK (F'_K)^{-T}$ are equal to $c_K$
and thus the left-hand side of (\ref{meshcnd-5}) is equal to one which is less than the right-hand side.
This means that (\ref{meshcnd-5}) is weaker than (\ref{meshcnd-3}).

Unfortunately, (\ref{meshcnd-5}) is still stronger than (\ref{meshcnd-2}).
Consider the triangular and tetrahedral elements in Fig.~\ref{fig:mesh-example}
and the case with $\D = I$. It is easy to see that the elements are non-obtuse and satisfy
(\ref{meshcnd-2}). A direct calculation shows that 
\[
\max_K \frac{\| (F_K')^{-1} \DK (F_K')^{-T}\|}{\det ( (F_K')^{-1} \DK (F_K')^{-T} )^{\frac{1}{d}} }=
\begin{cases}
1.732, & \quad \text{ for 2D elements in Fig.~\ref{fig:mesh-example}(a)}\\
2.151, & \quad \text{ for 3D elements in Fig.~\ref{fig:mesh-example}(b)}
\end{cases}
\]
which violates (\ref{meshcnd-5}).
Nevertheless, as will be seen in the next section, the conditions (\ref{meshcnd-3}) and (\ref{meshcnd-5})
provide useful guidelines for generating meshes that improve the MP satisfaction of the scheme.

\begin{figure}[hbt]
\centering
\hspace{1mm}
\hbox{
\begin{minipage}[b]{2.5in}
\centering
\includegraphics[scale=0.325]{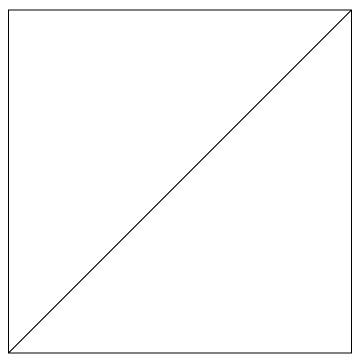}
\centerline{(a) 2D}
\end{minipage}
\hspace{1mm}
\begin{minipage}[b]{2.5in}
\centering
\includegraphics[scale=0.2]{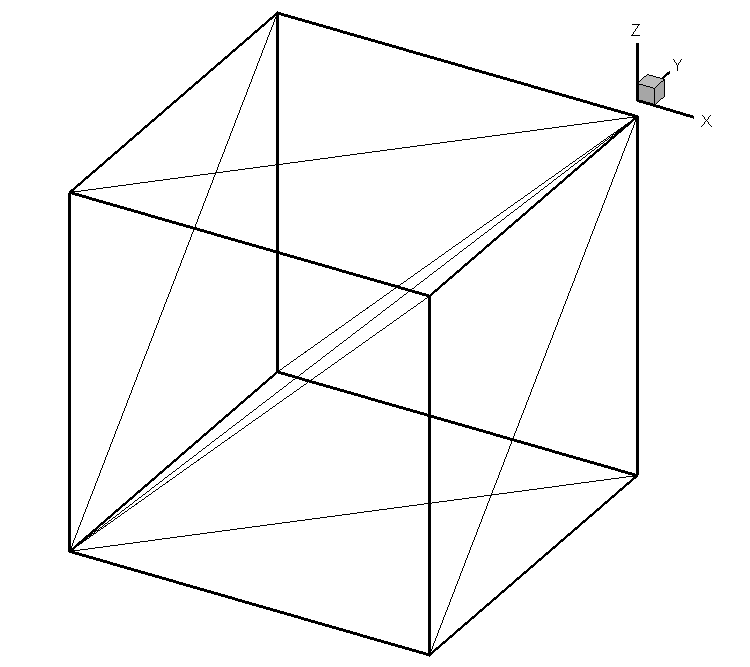}
\centerline{(b) 3D}
\end{minipage}
}
\caption{Examples of triangular and tetrahedral elements.}
\label{fig:mesh-example}
\end{figure}

% section 3
\section{Anisotropic mesh adaptation}
\label{sec-ANAC}

In this section we study anisotropic mesh adaptation for the anisotropic diffusion problem
(\ref{bvp-pde}) and (\ref{bvp-bc}). We use the so-called $\M$-uniform mesh approach
\cite{Hua05b,Hua06} where an adaptive mesh is viewed as a uniform one in the metric specified
by a tensor $\M = \M(\V{x})$ which is assumed to be symmetric and uniformly positive
definite on $\Omega$. For the moment, we assume that $\M$ has been given. Its choice will be
discussed later. It is shown in \cite{Hua06} that an $\M$-uniform tetrahedral mesh $\Th$ satisfies
\begin{align}
\label{eq-1}
&|K| \det(\MK)^{\frac{1}{2}}   =  \frac{\sigma_h}{N}, \quad \forall K \in \Th \\
\label{ali-1}
&\frac{1}{3} \mbox{tr} \left ( (F_K')^{-1} (\MK)^{-1} (F_K')^{-T} \right )  = 
\mbox{det} \left ( (F_K')^{-1} (\MK)^{-1} (F_K')^{-T} \right )^{\frac{1}{3}}, \quad \forall K \in \Th
\end{align}
where 
\beq
\MK = \frac{1}{|K|} \int_K \M(\V{x}) d \V{x},\quad
\sigma_h = \sum_{K \in \Th} \det(\MK)^{\frac{1}{2}} |K|.
\eeq
Condition (\ref{eq-1}) is called as \textit{the equidistribution condition} which requires all of the elements to
have the same size in the metric $\MK$ and therefore determines the size of $K$ from $\det(\MK)^{\frac{1}{2}}$.
On the other hand, (\ref{ali-1}) is called \textit{the alignment condition} which requires $K$, measured
in the metric $\MK$, to be similar to the reference element $\hat K$ that is measured in the Euclidean metric
and thus controls the shape and orientation of $K$. It can also be shown that
the principal axes of the circumscribed ellipsoid of $K$ are required to be parallel to
the eigenvectors of $\MK$ while their lengths are reciprocally proportional to
the square roots of the respective eigenvalues \cite{Hua06}.

$\M$-uniform or nearly $\M$-uniform meshes can be generated using various strategies;
see \cite[Section 4]{Hua06} for more detailed discussion. We mention just a few here, 
the Delaunay triangulation method\cite{BGHLS97,BGM97,CHMP97,PVMZ87},
the advancing front method \cite{GS98,MA14},
the bubble mesh method \cite{YS00},
the combination of combining refinement, local modification, and
local node movement \cite{ABHFDV02,BH96,DVBFH02,HDBAFV00,Los14},
and the variational method \cite{Hua01b}.
Particularly, we mention two C++ codes which can directly take the user supplied metric tensor.
One is BAMG (Bidimensional Anisotropic Mesh Generator) for 2D meshes developed by Hecht \cite{bamg} based on
the Delaunay triangulation and local node movement.
The other is MMG3D (Anisotropic Tetrahedral Remesher/Moving Mesh Generation) developed
by Dobrzynski and Frey \cite{mmg3d} based on refinement and local node movement.
The latter is used in our computation.

A key component of the $\M$-uniform mesh approach is to define the metric tensor. We consider
four choices here. The first one is
\begin{equation}
\label{M-uniform}
\M_{id} = I, \quad \forall \V{x} \in \Omega.
\end{equation}
This is the simplest choice, and the resulting meshes are uniform or nearly uniform.

The second choice is based on linear interpolation error. It is known that the error for
the linear finite element solution to (\ref{bvp-pde}) and (\ref{bvp-bc}) is bounded by
the error in the piecewise linear interpolation of the exact solution. Thus, it is reasonable
to define the metric tensor based on linear interpolation error. A metric tensor based on
minimization of the $H^1$ semi-norm of linear interpolation error is given \cite{Hua05b} by
\beq
\label{M-adap}
\M_{adap}(K) = \Big \| I + \frac{1}{\alpha_h} | H_K(u^h) | \Big \| ^{\frac{2}{5}}
\det\left( I+\frac{1}{\alpha _{h}}|H_K(u^h)|\right) ^{-\frac{1}{5}}
\left[ I+\frac{1}{\alpha _{h}}|H_K(u^h)|\right] ,
\quad \forall K \in \Th
\eeq
where $u^h$ is the finite element solution, $H_K(u^h)$ is a recovered Hessian of $u^h$ over $K$,
$|H_K(u^h)|$ is the eigen-decomposition of $H_K(u^h)$ with the eigenvalues being replaced
by their absolute values, and $\alpha_h$ is defined implicitly through
\beq
\label{alphah}
\sum_{K\in \Th} |K| \sqrt{\det(\M_{adap}(K))} = 2 | \Omega | .
\eeq 
This equation determines $\alpha_h$ uniquely and can be solved, for instance, by the bisection method.
Moreover, the choice of $\alpha_h$ in this way concentrates roughly 50\% of the mesh points
in the region where the solution changes significantly.

The third and fourth choices are related to the diffusion matrix.
Recall that (\ref{ali-1}) requires that all of the eigenvalues of the matrix $(F_K')^{-1} (\MK)^{-1} (F_K')^{-T}$
be equal to each other. It is mathematically equivalent to
\beq
\label{ali-2}
(F_K')^{-1} (\MK)^{-1} (F_K')^{-T} = \tilde{c}_K I
\eeq
for some constant $\tilde{c}_K$ on $K$. Comparing this with (\ref{meshcnd-3}) suggests that we choose
the metric tensor as
\beq
\M(K) = \theta_K \DK^{-1}, \quad \forall K \in \Th
\label{M-DMP-0}
\eeq
where $\theta_K$ is an arbitrary positive piecewise constant function and $\DK$ is the average
of $\D$ over $K$. From (\ref{ali-2}), it is not difficult to see that any $\M$-uniform mesh associated
with this metric tensor satisfies (\ref{meshcnd-3}) and therefore, the finite element solution $u^h$
satisfies MP.

Then, the third choice is
\beq
\M_{DMP}(K) = \DK^{-1}, \quad \forall K \in \Th
\label{M-DMP}
\eeq
which corresponds to $\theta_K = 1$.
For the fourth choice, we take advantage of the arbitrariness of $\theta_K$ in \eqref{M-DMP-0} and choose it to minimize
a bound of $H^1$ semi-norm of linear interpolation error. This way we can combine the desire of
preserving MP with mesh adaptation.
The metric tensor reads as (cf. \cite{LH10})
\beq
\label{M-DMP+adap}
\M_{DMP+adap}(K) = \left ( 1+ \frac{1}{\alpha_h}  B_K \right )^{\frac{2}{5}}
\mbox{det} \left ( \DK \right )^{\frac{1}{3}} \DK^{-1},
\eeq
where
\begin{align*}
& B_K = \mbox{det} \left ( \DK \right )^{-\frac{1}{3}}
 \|\DK^{-1} \|\cdot   \|\DK |H_K(u^h)| \|^2,
\\
& \alpha_h = \left ( \frac{1}{|\Omega|} \sum_{K \in \Th} |K| B_K^{\frac{3}{5}} \right )^{\frac{5}{3}}.
\end{align*}
The $\M$-uniform meshes associated with $\M_{DMP}$ and $\M_{DMP+adap}$ satisfy the alignment condition \eqref{ali-1}, and the mesh elements are aligned along the principal diffusion direction defined as the direction of the eigenvector corresponding to the largest eigenvalue of the diffusion matrix $\D$.

It should be pointed out that in practical computation, it is difficult, if not impossible, to generate a perfect $\M$-uniform
mesh that satisfy the conditions (\ref{eq-1}) and (\ref{ali-1}).
Thus, it makes sense to measure how far a given mesh is from satisfying these conditions.
From (\ref{eq-1}) and (\ref{ali-1}), we can define the equidistribution and alignment measures as
\bey
\label{meas-eq}
Q_{eq}(K) & = & \frac{N |K| \det(\MK)^{\frac{1}{2}}}{\sigma_h}, \\
\label{meas-ali}
Q_{ali}(K) & = & \frac{ \| (F_K')^{-1} (\MK)^{-1} (F_K')^{-T} \|}
				{\mbox{det} \left ( (F_K')^{-1} (\MK)^{-1} (F_K')^{-T} \right )^{\frac{1}{3}}}.
\eey
It is not difficult to show that the maximum norm of both functions, $\| Q_{eq} \|_{L^\infty}$
and $\| Q_{ali} \|_{L^\infty}$, has the range $[1, \infty)$.
Moreover, $\| Q_{eq} \|_{L^\infty} = 1$ and $\| Q_{ali} \|_{L^\infty} = 1$ imply a perfect $\M$-uniform mesh.
The larger $\| Q_{eq} \|_{L^\infty}$ and $\| Q_{ali} \|_{L^\infty}$ are, the farther the mesh is away from being
$\M$-uniform. It is worth emphasizing that only $Q_{ali}$ is related to MP satisfaction.
The quantity $Q_{eq}$ indicates how evenly the error is distributed among the elements.

It is interesting to point out that the definition (\ref{meas-ali}) is slightly different from
a more common definition \cite{Hua06} where the trace of  $(F_K')^{-1} (M_K)^{-1} (F_K')^{-T}$ is used,
\beq
\label{meas-ali2}
\widetilde{Q}_{ali}(K) = \frac{\frac{1}{3}\mbox{tr} \left ( (F_K')^{-1} (M_K)^{-1} (F_K')^{-T} \right )}
				{\mbox{det} \left ( (F_K')^{-1} (M_K)^{-1} (F_K')^{-T} \right )^{\frac{1}{3}}}.
\eeq
These two definitions are equivalent since the trace and 2-norm of a positive definite matrix are equivalent.
The main motivation for which we use (\ref{meas-ali}) is that the condition (\ref{meshcnd-5})
can now be rewritten as
\beq
\| Q_{ali}\|_{L^\infty} \le 
\min\Big\{1+\frac{1}{d}, \Big(1-\frac{1}{d}\Big)^{-\frac{1}{d-1}}\Big\} .
\label{meas-ali3}
\eeq
As mentioned before, this condition is hard to satisfy. Indeed, for meshes shown in Fig.~\ref{fig:mesh-example},
we have $\| Q_{ali}\|_{L^\infty} = 1.732$ (2D) and 2.151 (3D). 
Although they violate (\ref{meas-ali3}), the meshes can still be considered to be close to satisfying the alignment condition
since $\| Q_{ali}\|_{L^\infty}$ is relatively small. Moreover, (\ref{meas-ali3}) shows a connection between
the alignment condition and the preservation of MP.

% section 4
\section{Numerical examples}
\label{sec-exam}

In this section we present two three-dimensional examples to demonstrate the performance of the anisotropic
mesh adaptation strategy described in the previous section with the four metric tensors.
An iterative procedure for solving PDEs using anisotropic adaptive meshes is shown in Fig.~\ref{procedure}.
The PDE is first solved on the current mesh using piecewise linear finite elements.
Then, the metric tensor $\M$ is computed, followed by the generation of a new mesh for the metric tensor
using MMG3D. 
This procedure is repeated a few times untill the quliaty measures \eqref{meas-eq} and \eqref{meas-ali} cannot not be improved further. In our computations, the procedure is repeated five times and numerical experiment shows that there is no significant improvement
in the results when more iterations are used.
The computations are performed in the framework of an open source finite element software, FreeFem++ developed by Hecht \cite{FFem}, with the linear solver being chosen as the conjugate gradient method with tolerance $10^{-15}$.
Moreover, the Hessian of the finite element solution $u^h$ used in the computation of the metric tensors
is computed using the ``mshmet'' library embedded in FreeFem++. 
The ``mshmet'' library first approximates the gradient (first derivatives) using the least squares fitting, then uses the approximated gradient in the least squares fitting for the Hessian matrix (second derivatives).

\begin{figure}[tbh]
\centering
\tikzset{my node/.code=\ifpgfmatrix\else\tikzset{matrix of nodes}\fi}
\begin{tikzpicture}[every node/.style={my node},scale=0.45]
\draw[thick] (0,0) rectangle (6,3.5);
\draw[thick] (8,0) rectangle (14,3.5);
\draw[thick] (16,0) rectangle (22,3.5);
\draw[thick] (24,0) rectangle (30,3.5);
\draw[->,thick] (6,1.75)--(8,1.75);
\draw[->,thick] (14,1.75)--(16,1.75);
\draw[->,thick] (22,1.75)--(24,1.75);
\draw[->,thick] (27,0)--(27,-2)--(11,-2)--(11,0);
%\node[above] at (19,-2) {iteration\\};
\node (node1) at (3,1.75) {Given a mesh\\};
\node (node2) at (11,1.75) {Solve PDE\\};
\node (node3) at (19,1.75) {Recover solution\\ derivatives and\\ compute $\M$\\};
\node (node4) at (27,1.75) {Generate\\ new mesh\\ according to $\M$\\};
\end{tikzpicture}
\caption{An iterative procedure for adaptive mesh solution of PDEs.}
\label{procedure}
\end{figure}

The meshes associated with the four metric tensors will be denoted with the same notation
for the metric tensors. For instance, a mesh associated with $\M_{adap}$ will be called an $\M_{adap}$ mesh.
We consider the diffusion matrix in the form
\beq
\D = \begin{bmatrix} \sin \phi \cos \theta & -\sin \theta & \cos \phi \cos \theta \\ \sin \phi \sin \theta & \cos \theta & \cos \phi \sin \theta \\ \cos \phi & 0 & -\sin \phi \end{bmatrix} 
\begin{bmatrix} k_1 & 0 & 0 \\ 0 & k_2 & 0 \\ 0 & 0 & k_3 \end{bmatrix}
\begin{bmatrix} \sin \phi \cos \theta & -\sin \theta & \cos \phi \cos \theta \\ \sin \phi \sin \theta & \cos \theta & \cos \phi \sin \theta \\ \cos \phi & 0 & -\sin \phi \end{bmatrix}^T ,
\label{df-eigen}
\eeq
where  $k_1$ is the dominant eigenvalue, $\phi$ is the angle between the principal diffusion direction
and the positive $z$-axis, and $\theta$ is the angle between the projection of the principal diffusion vector
on the $xy$-plane and the positive $x$-axis.

% example 4.1
\begin{exam}
\label{ex1}

For the first example, we would like to compare the $L^2$-norm of solution errors obtained from different meshes and check the violation/satisfaction of MP.
We consider problem (\ref{bvp-pde}) in the unit cube $\Omega=(0,1)^3$.
$\D$ is chosen in the form of (\ref{df-eigen}) with $\phi=-\pi/4$, $\theta=5\pi/6$, $k_1=100$, $k_2=10$, and $k_3=1$. 
The source function $f$ and the boundary function $g$ are chosen such that the exact solution is given by
\beq
\label{ex1-ue}
u = e^{-100((x-0.5)^2+(y-0.5)^2-0.1^2)}+z^2. 
\eeq
The continuous problem satisfies MP and the solution is in the interval $(0,1+e]$.

The exact solution in the domain $\Omega$ and on the cross-sections $x=0.5$ and $z=0.5$ is shown
in Fig.~\ref{ex1-soln}. Fig.~\ref{ex1-mesh} shows the four types of meshes with a view of the inner mesh by
cutting a corner of the domain. Fig.~\ref{ex1-mesh2} shows the meshes on the cross-section $x=0.5$.
The elements in the $\M_{DMP}$ mesh are aligned well along the primary diffusion direction $\theta=5\pi/6$, while the elements in the $\M_{id}$ mesh are aligned along the direction of $\theta=\pi/4$. The elements in the $\M_{adap}$ mesh concentrate around the central region where the solution changes rapidly. The elements in the $\M_{DMP+adap}$
mesh not only are aligned along the primary diffusion direction but also concentrate around the central region,
which is a result of combining MP preservation and adaptivity.

Table~\ref{ex1-result} shows the $L^2$-norm of the solution error obtained on $\M_{id}$, $\M_{adap}$,
$\M_{DMP}$, and $\M_{DMP+adap}$ meshes. The minimal value in the computed solution,
which is denoted by $u_{min}$ and indicates the undershoots and violation of MP,
is also reported in the table. The convergence history of the $L^2$-norm of the error is shown in Fig.~\ref{ex1-conv}.
It is clear that the error converges at a rate of second order as the mesh is refined.
Moreover, the numerical solution obtained using the $\M_{adap}$ mesh has the smallest error,
which is consistent with the fact that $\M_{adap}$ is formulated based on the minimization
of the interpolation error (and the finite element error). The solution obtained using the $\M_{DMP}$
mesh has the largest error due to the fact that the mesh is designed to satisfy MP,
which is generally in conflict with error reduction.  The error of the solution obtained using $\M_{DMP+adap}$
is between those using $\M_{adap}$ and $\M_{DMP}$, which is expected from the construction of
$\M_{DMP+adap}$ and shows a good balance between mesh adaptivity and MP preservation.

It can also be observed from Table~\ref{ex1-result} that the numerical solution obtained from
$\M_{id}$ and $\M_{adap}$ violate MP even for very fine meshes. On the other hand,
the numerical solutions obtained from $\M_{DMP}$ and $\M_{DMP+adap}$ violates MP
for coarse meshes but satisfy MP when the mesh is sufficiently fine.
This indicates that $\M_{DMP}$ and $\M_{DMP+adap}$ can help improving the satisfaction of
MP for the finite element solution.

\begin{figure}[hbt]
\centering
\includegraphics[scale=0.25]{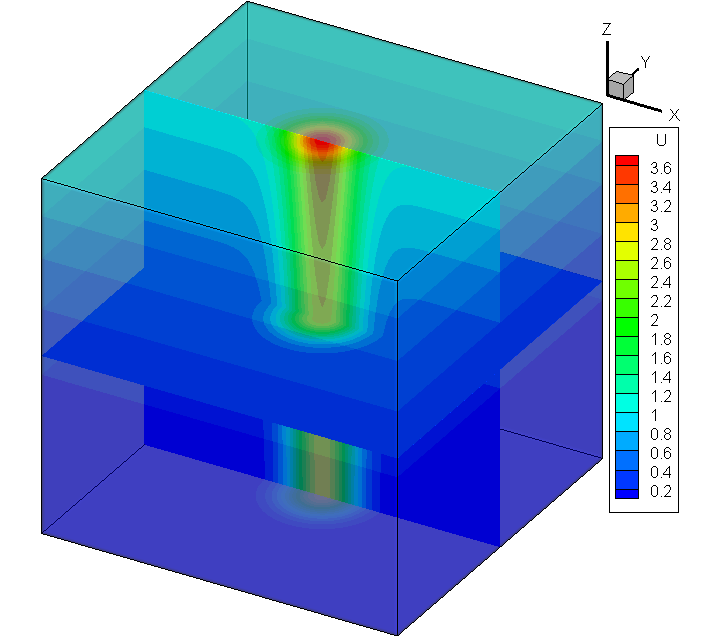}
\caption{Example \ref{ex1}. Exact solution in the domain $\Omega$ and on the cross-sections $y=0.5$ and $z=0.5$.}
\label{ex1-soln}
\end{figure}

\begin{figure}[hbt]
\centering
\hbox{
\hspace{1mm}
\begin{minipage}[b]{2.5in}
\includegraphics[scale=0.25]{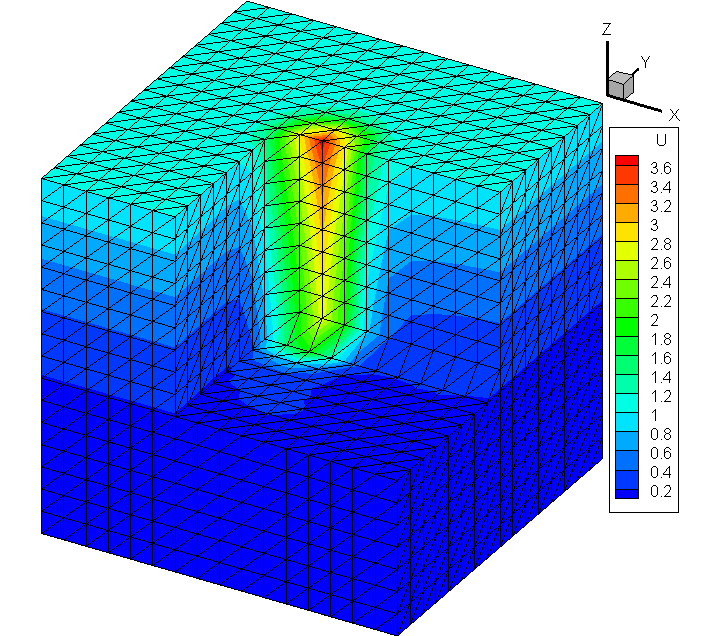}
\centerline{(a): $\M_{id}$ mesh, $N=24,576$}
\end{minipage}
\hspace{1mm}
\begin{minipage}[b]{2.5in}
\includegraphics[scale=0.25]{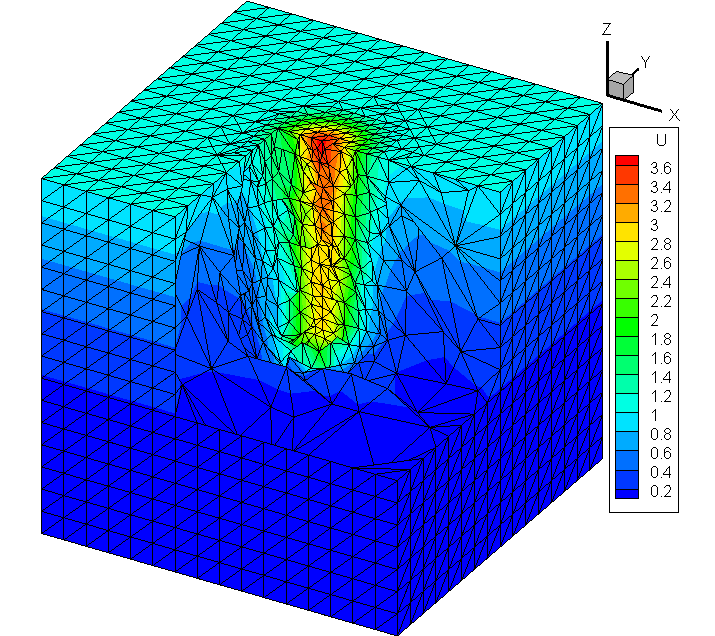}
\centerline{(b): $\M_{adap}$ mesh, $N=17,304$}
\end{minipage}
}
\vspace{5mm}
\hbox{
\hspace{1mm}
\begin{minipage}[b]{2.5in}
\includegraphics[scale=0.25]{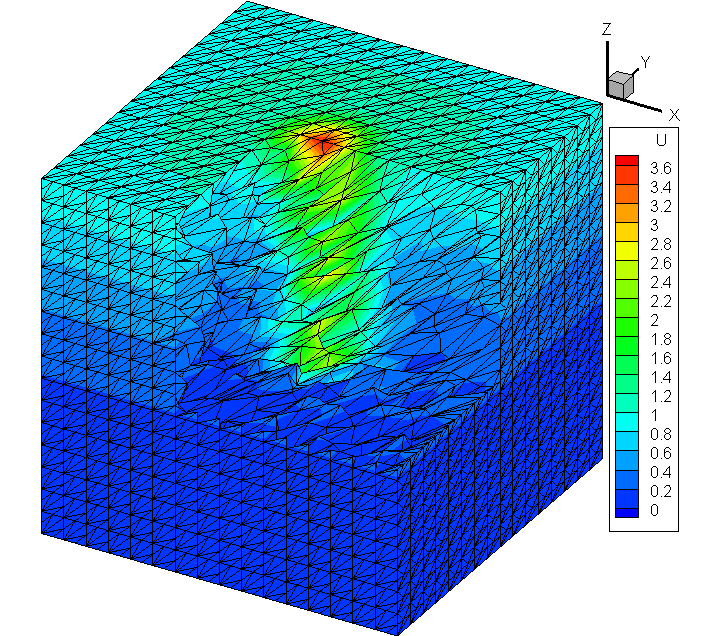}
\centerline{(c): $\M_{DMP}$ mesh, $N=29,023$}
\end{minipage}
\hspace{1mm}
\begin{minipage}[b]{2.5in}
\includegraphics[scale=0.25]{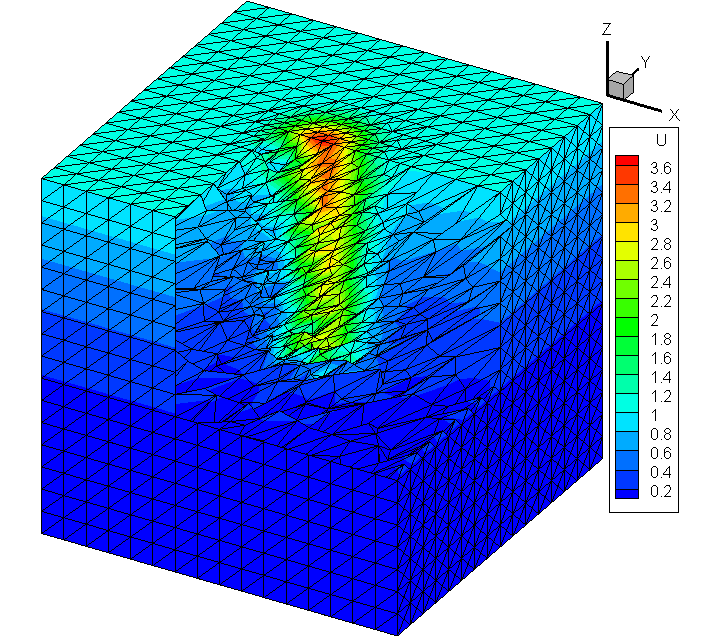}
\centerline{(d): $\M_{DMP+adap}$ mesh, $N=20,099$}
\end{minipage}
}
\caption{Example \ref{ex1}. Different meshes with a view of the inner mesh by cutting a corner.}
\label{ex1-mesh}
\end{figure}

\begin{figure}[hbt]
\centering
\hbox{
\hspace{1mm}
\begin{minipage}[b]{2.5in}
\includegraphics[scale=0.25]{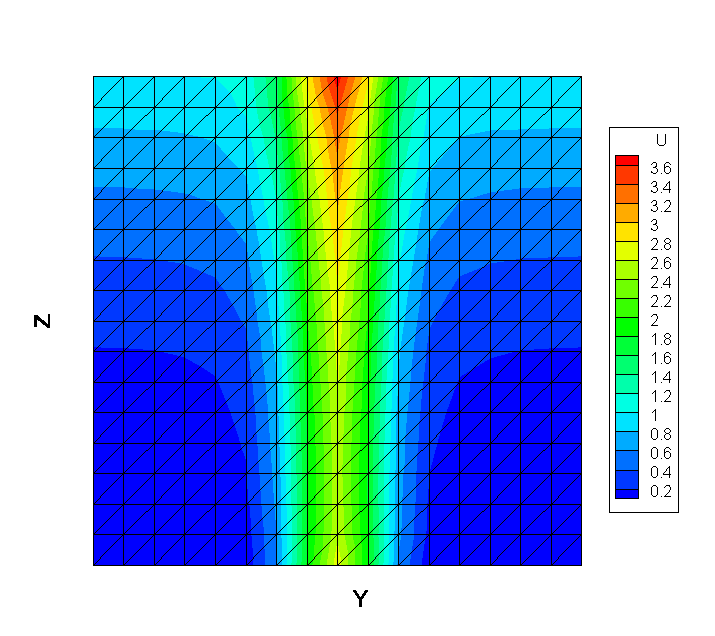}
\centerline{(a): $\M_{id}$ mesh}
\end{minipage}
\hspace{1mm}
\begin{minipage}[b]{2.5in}
\includegraphics[scale=0.25]{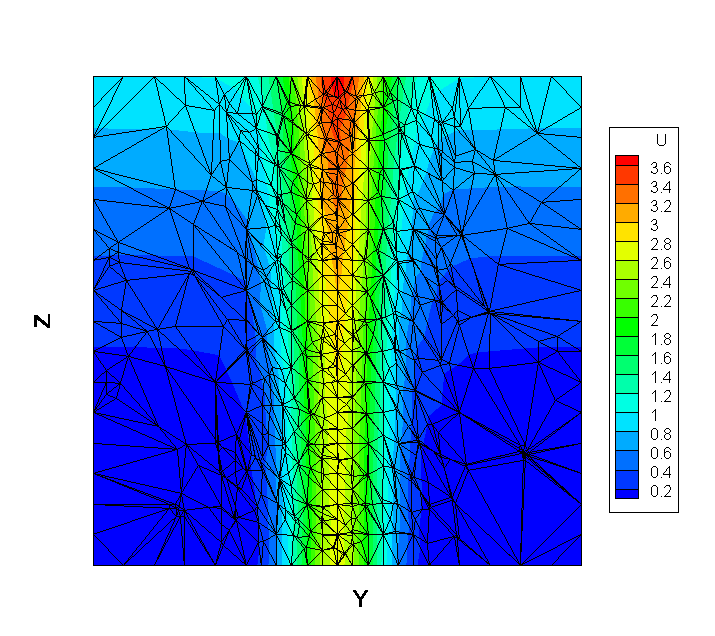}
\centerline{(b): $\M_{adap}$ mesh}
\end{minipage}
}
\vspace{5mm}
\hbox{
\hspace{1mm}
\begin{minipage}[b]{2.5in}
\includegraphics[scale=0.25]{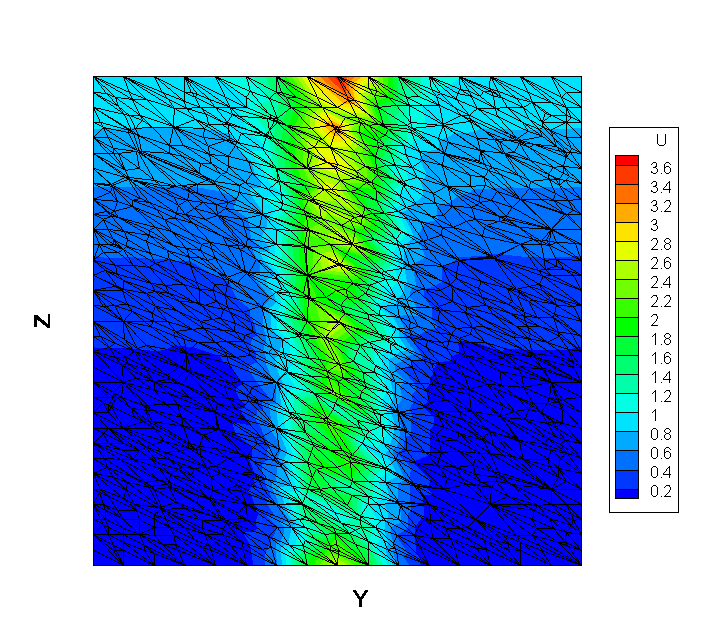}
\centerline{(c): $\M_{DMP}$ mesh}
\end{minipage}
\hspace{1mm}
\begin{minipage}[b]{2.5in}
\includegraphics[scale=0.25]{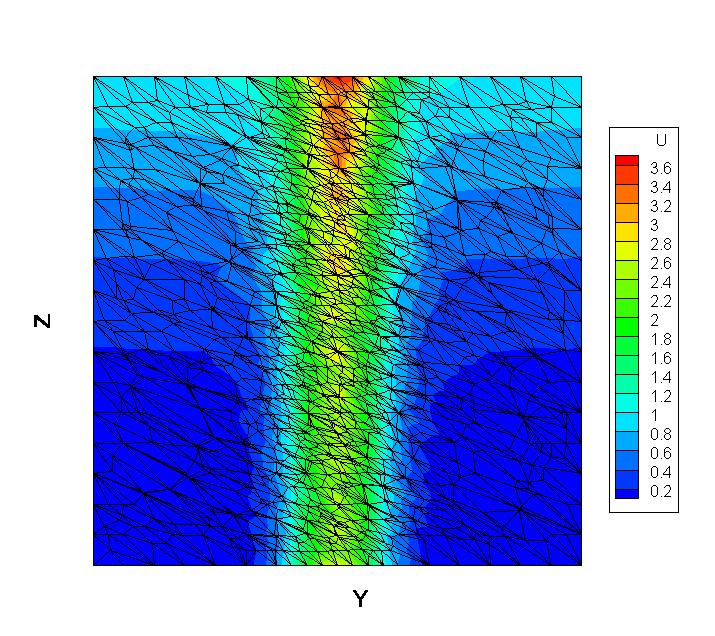}
\centerline{(d): $\M_{DMP+adap}$ mesh}
\end{minipage}
}
\caption{Example \ref{ex1}. Different meshes at cross-section $x=0.5$.}
\label{ex1-mesh2}
\end{figure}

\begin{table}[hbt]
\caption{Numerical results obtained with different meshes for Example \ref{ex1}.}
\vspace{2pt}
\centering
\begin{tabular}{c|c|cccccc}
\hline \hline
% & h & 1/4 & 1/8 & 1/16 & 1/32 & 1/50 & 1/64 \\
$\M_{id}$ & N & 384 & 3,072 & 24,576 & 196,608 & 750,000 & 1,572,864  \\
 mesh & $L^2$ error & 1.99e-1 & 1.07e-1 & 5.86e-2 & 1.98e-2 & 8.85e-3 & 5.55e-3 \\
% & errorH1 & 5.12 & 4.50 & 2.90 & 1.63 & 1.06 & 8.35e-1 \\
& $u_{min}$ & -2.93e-2 & -4.06e-2 & -2.02e-2 & -4.79e-3 & -9.71e-4 & -3.87e-4 \\
\hline 
 $\M_{adap}$ & N & 572 & 2,865 & 17,304 & 133,012 & 627,329 & 1,521,648 \\
 mesh & $L^2$ error & 1.20e-1 & 5.67e-2 & 1.41e-2 & 3.72e-3 & 1.53e-3 & 9.56e-4 \\
% & errorH1 & 3.98 & 2.68 & 1.07 & 4.85e-1 & 2.96e-1 & 2.30e-1  \\
& $u_{min}$ & -1.94e-2 & -2.26e-2 & 0 & -4.17e-4 & -5.68e-5 & -7.04e-5 \\
\hline
$\M_{DMP}$  & N & 1,254 & 5,962 & 29,023 & 201,039 & 736,646 & 1,542,910  \\
 mesh & $L^2$ error & 2.74e-1 & 1.96e-1 & 9.84e-2 & 3.40e-2 & 1.59e-2 & 1.03e-2  \\
% & errorH1 & 5.94 & 6.36 & 4.68 & 3.32 & 2.46 & 1.97 \\
& $u_{min}$ & -2.90e-1 & -1.28e-1 & -7.84e-2 & -2.52e-3 & 0 & 0  \\
\hline
$\M_{DMP+adap}$ & N & 1,017 & 3,500 & 20,099 & 132,215 & 477,802 & 978,845  \\
 mesh  & $L^2$ error & 2.71e-1 & 1.48e-1 & 5.12e-2 & 1.45e-2 & 6.34e-3 & 3.88e-3 \\
% & errorH1 & 5.74 & 6.92 & 3.71 & 2.02 & 1.30 & 1.02  \\
& $u_{min}$ & -6.23e-2 & -2.25e-2 & -5.60e-2 & 0 & 0 & 0 \\
\hline \hline
\end{tabular}
\label{ex1-result}
\end{table}

\begin{figure}[hbt]
\centering
\includegraphics[scale=0.5]{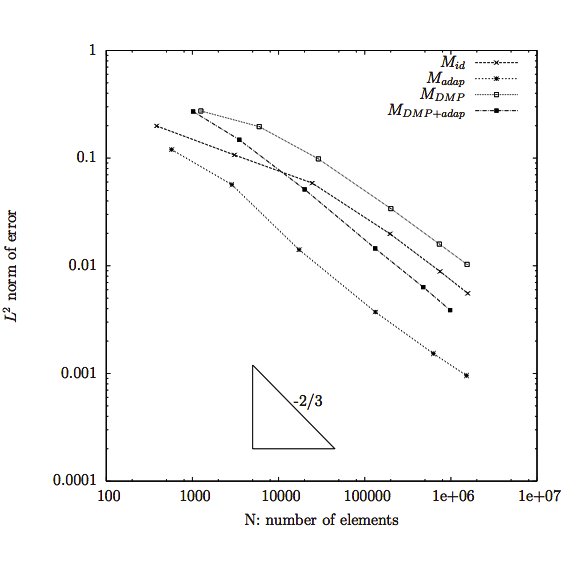}
\caption{Example \ref{ex1}. Convergence history of $L^2$-norm of the error for different meshes.}
\label{ex1-conv}
\end{figure}

\end{exam}

% example 4.2
\begin{exam}
\label{ex2}

For this example, we would like to discuss the relation between mesh quality and MP satisfaction for three different cases.
The domain is $(0,1)^3$ with a cubic hole $[0.4,0.6]^3$ cut inside.
The problem is in the form (\ref{bvp-pde}) with $f = 0$ and the Dirichlet boundary condition $u = 0$
on the outer surface and $u=4$ on the inner surface. 
The diffusion matrix is in the form of (\ref{df-eigen}) with $\phi=\pi/4$, $k_1=100$, $k_2=10$, and $k_3=10$.
The primary diffusion direction is on the plane of $y-z=d$ ($-1 \le d \le 1$) and
in the direction defined by the angle $\theta$. 

We first consider two different values of $\theta$ and then a diffusion matrix with a strong
anisotropic feature. The continuous problem satisfies MP and the exact solution
satisfies $0 \le u \le 4$. 

\vspace{10pt}

\textbf{ Case 1.} $\theta=\pi/4$. Fig.~\ref{ex2-soln} shows the numerical solution obtained using
a fine $\M_{id}$ mesh of 5,952,000 tetrahedra. Four different meshes are shown in Fig.~\ref{ex2-mesh}. 
Table~\ref{ex2-result} shows the minimal value $u_{min}$ in the numerical solution obtained using different meshes.
The corresponding mesh quality measures $||Q_{ali}||_{L^2}$, $||Q_{ali}||_{L^{\infty}}$, and $||Q_{eq}||_{L^2}$ are also reported in Table~\ref{ex2-result}. Recall that only $Q_{ali}$ is related to MP satisfaction, while $Q_{eq}$
indicates how closely the mesh satisfies the equidistribution condition \eqref{eq-1}.
The mesh quality measures for $\M_{id}$ mesh are calculated
using $\M = \D^{-1}$ in order to see if it satisfies (\ref{meas-ali3}).
The numerical solutions from all the meshes have the maximal value of $u_{max}=4$. 

It can be seen that the numerical solution obtained using the $\M_{adap}$ mesh has
undershoots ($u_{min}<0$) while the solutions obtained using other meshes satisfy MP.
Interestingly, for this case MMG3D produces the same mesh from $\M_{DMP}$ and $\M_{id}$.
An explanation is that the initial mesh, the $\M_{id}$ mesh, is already very close to an $\M_{DMP}$ mesh
(with $||Q_{ali}||_{L^2}=2.32$). Since a half of the $\M_{id}$ mesh elements are aligned with the primary diffusion direction and the anisotropy is not significant, the elements do not need to be stretched more. In fact, the $\M_{id}$ mesh is closer to an $M_{DMP}$ mesh when $k_1=50$ with $||Q_{ali}||_{L^2}=1.72$. When the anisotropy is more significant,
the $\M_{id}$ mesh is farther away from the $\M_{DMP}$ mesh and the elements need to be stretched more along the primary diffusion direction. MMG3D will then adapt the mesh to be more different from the $\M_{id}$ mesh,
as will be shown in Case 3.
Moreover, both $\M_{adap}$ and $\M_{DMP+adap}$ meshes have elements of worse shape
(with large $||Q_{ali}||_{L^{\infty}}$)
but the overall quality is acceptable (with relatively small $||Q_{ali}||_{L^2}$).

\begin{figure}[hbt]
\centering
\includegraphics[scale=0.25]{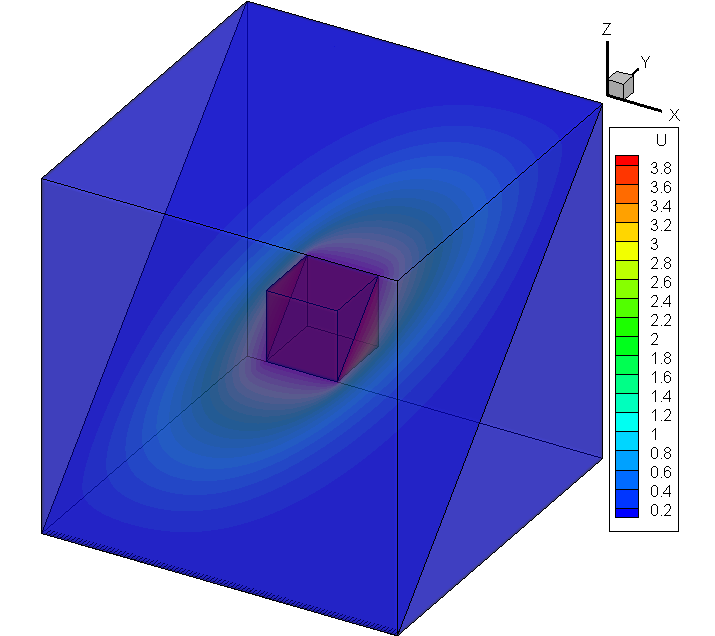}
\caption{Example \ref{ex2} Case 1. Numerical solution in the domain $\Omega$ and on the cross-section $z=y$.}
\label{ex2-soln}
\end{figure}

\begin{figure}[thb]
\centering
\hbox{
\hspace{1mm}
\begin{minipage}[b]{2.5in}
\includegraphics[scale=0.25]{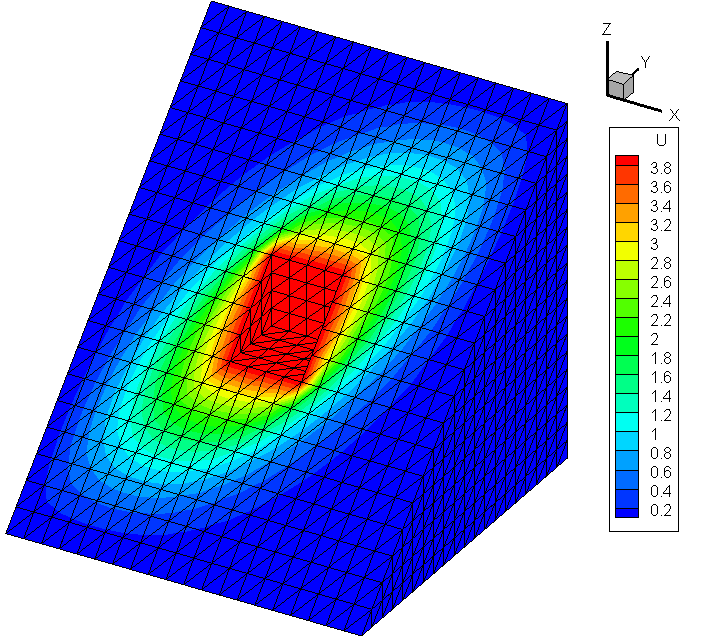}
\centerline{(a): $\M_{id}$ mesh, $N=47,616$}
\end{minipage}
\hspace{1mm}
\begin{minipage}[b]{2.5in}
\includegraphics[scale=0.25]{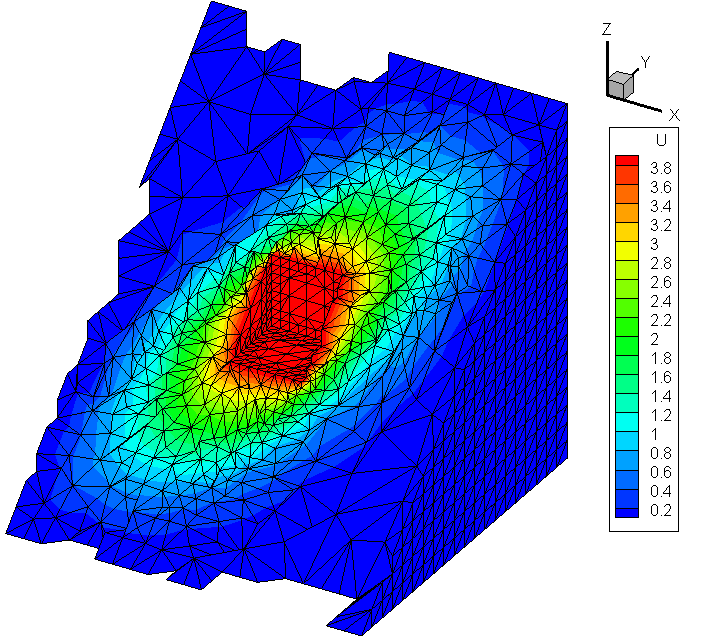}
\centerline{(b): $\M_{adap}$ mesh, $N=36,066$}
\end{minipage}
}
\vspace{5mm}
\hbox{
\hspace{1mm}
\begin{minipage}[b]{2.5in}
\includegraphics[scale=0.25]{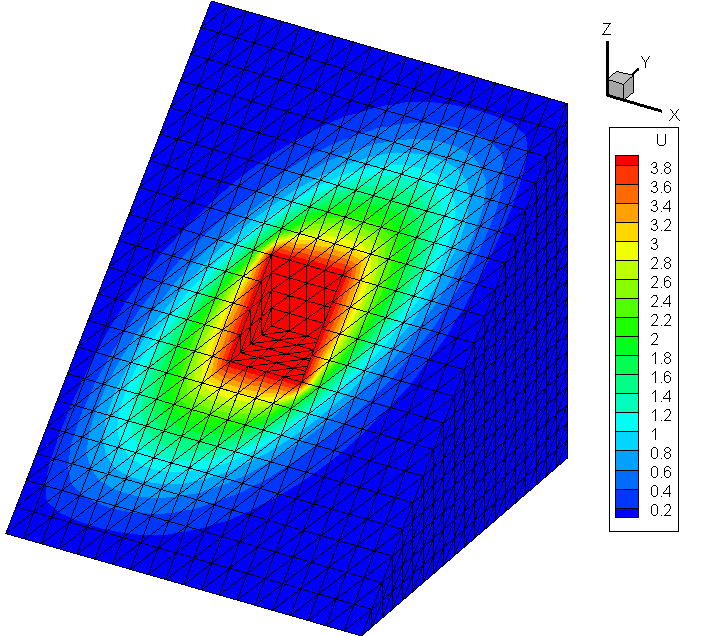}
\centerline{(c): $\M_{DMP}$ mesh, $N=47,616$}
\end{minipage}
\hspace{1mm}
\begin{minipage}[b]{2.5in}
\includegraphics[scale=0.25]{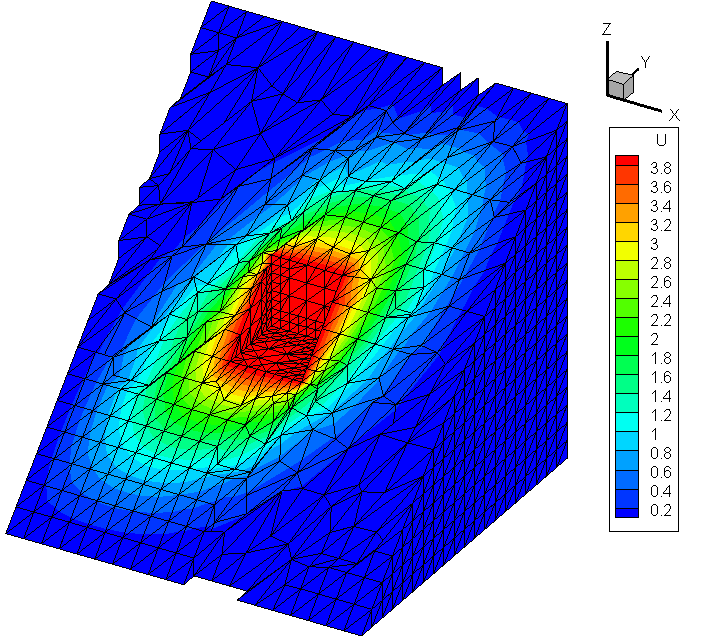}
\centerline{(d): $\M_{DMP+adap}$ mesh, $N=35,528$}
\end{minipage}
}
\caption{Example \ref{ex2} Case 1. Different meshes at cross-section $z=y$ for $\theta=\pi/4$.}
\label{ex2-mesh}
\end{figure}

\begin{table}[hbt]
\caption{Minimal solution values and mesh quality measures using different meshes for Example \ref{ex2} Case 1.}
\vspace{2pt}
\centering
\begin{tabular}{c|r|l|c|r|c}
\hline \hline
 Mesh & $N$ & $u_{min}$ & $||Q_{ali}||_{L^2}$ & $||Q_{ali}||_{L^{\infty}}$ & $||Q_{eq}||_{L^2}$ \\
\hline
 & 744 & 0 & 2.32 & 2.58 & 1.00  \\
$\M_{id}$ & 5,952 & 0 & 2.32 & 2.58 & 1.00 \\
 mesh & 47,616 & 0 & 2.32 & 2.58 & 1.00 \\
& 380,928 & 0 & 2.32 & 2.58 & 1.00 \\
& 3,047,424 & 0 & 2.32 & 2.58 & 1.00 \\
\hline 
 & 976 & 0 & 3.22 & 13.0 & 1.15  \\
 $\M_{adap}$ & 5,616 & -1.47e-4 & 3.98 & 22.17 & 1.06 \\
mesh & 36,066 & -2.38e-3 & 4.38 & 47.11 & 1.11 \\
& 278,023 & -1.30e-3 & 4.25 & 106.7 & 1.16 \\
& 2,779,767 & 0 & 4.21 & 151.5 & 1.38 \\
\hline
 & 744 & 0 & 2.32 & 2.58 & 1.00  \\
$\M_{DMP}$ & 5,952 & 0 & 2.32 & 2.58 & 1.00 \\
 mesh & 47,616 & 0 & 2.32 & 2.58 & 1.00 \\
& 380,928 & 0 & 2.32 & 2.58 & 1.00 \\
& 3,047,424 & 0 & 2.32 & 2.58 & 1.00 \\
\hline
 & 748 & 0 & 2.28 & 3.36 & 1.08  \\
$\M_{DMP+adap}$ & 6,539 & 0 & 2.32 & 7.41 & 1.07 \\
 mesh & 35,528 & 0 & 2.27 & 9.01 & 1.17 \\
& 397,125 & 0 & 2.29 & 8.14 & 1.12 \\
& 3,361,403 & 0 & 2.32 & 12.43 & 1.05 \\
\hline \hline
\end{tabular}
\label{ex2-result}
\end{table}

\vspace{10pt}

\textbf{ Case 2.} $\theta=3\pi/4$. In this case, the elements of the $\M_{id}$ mesh are no longer aligned
along the primary diffusion direction. Meshes on the cross-section of $z=y$ are shown in Fig.~\ref{ex2-mesh2}.
Notice that the orientation of elements in the $\M_{DMP}$ mesh ($3\pi/4$ direction) is very different from that
in the $\M_{id}$ mesh ($\pi/4$ direction). The results of $u_{min}$ and mesh quality measures are listed
in Table~\ref{ex2-result2}.

One can see that none of the meshes satisfies (\ref{meas-ali3}), however, recall that (\ref{meas-ali3}) or (\ref{meshcnd-5}) is only a sufficient condition for the MP satisfaction.
Notice that the numerical solutions obtained from both the $\M_{DMP}$ mesh and $M_{{DMP+adap}}$ mesh satisfy MP whereas others do not.
Fig.~\ref{ex2-mesh2} also shows that $M_{DMP+adap}$ mesh not only tempts to make elements to be aligned
with the primary diffusion direction but also concentrates elements to minimize the solution error.

\begin{figure}[hbt]
\centering
\hbox{
\hspace{1mm}
\begin{minipage}[b]{2.5in}
\includegraphics[scale=0.25]{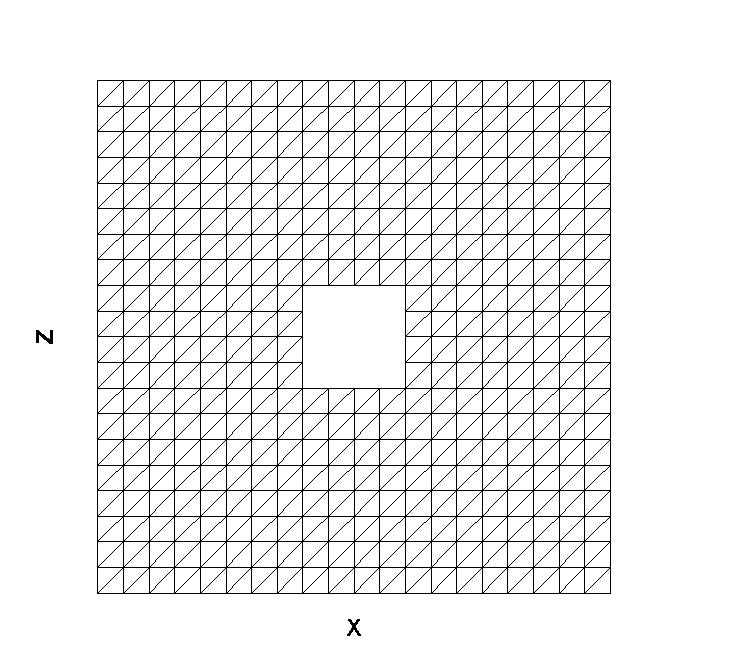}
\centerline{(a): $\M_{id}$ mesh, $N=47,616$}
\end{minipage}
\hspace{1mm}
\begin{minipage}[b]{2.5in}
\includegraphics[scale=0.25]{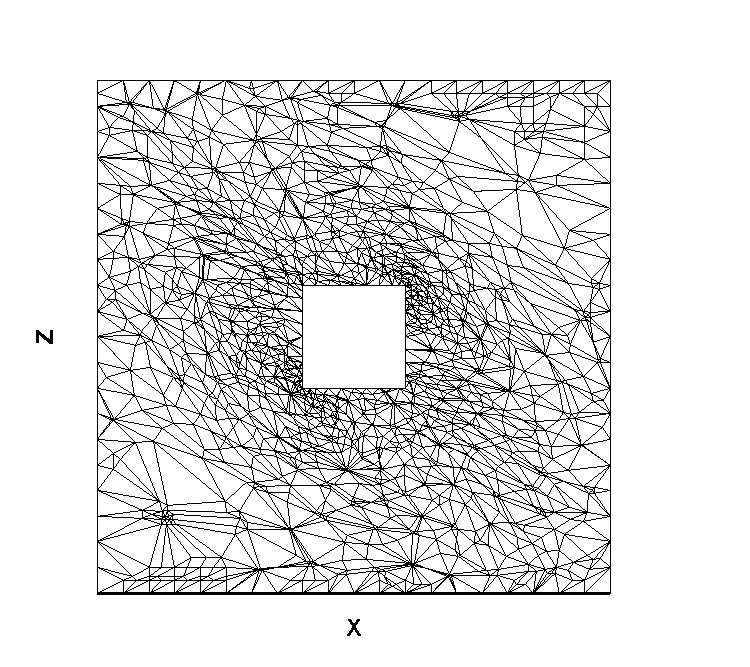}
\centerline{(b): $\M_{adap}$ mesh, $N=37,899$}
\end{minipage}
}
\hbox{
\hspace{1mm}
\begin{minipage}[b]{2.5in}
\includegraphics[scale=0.25]{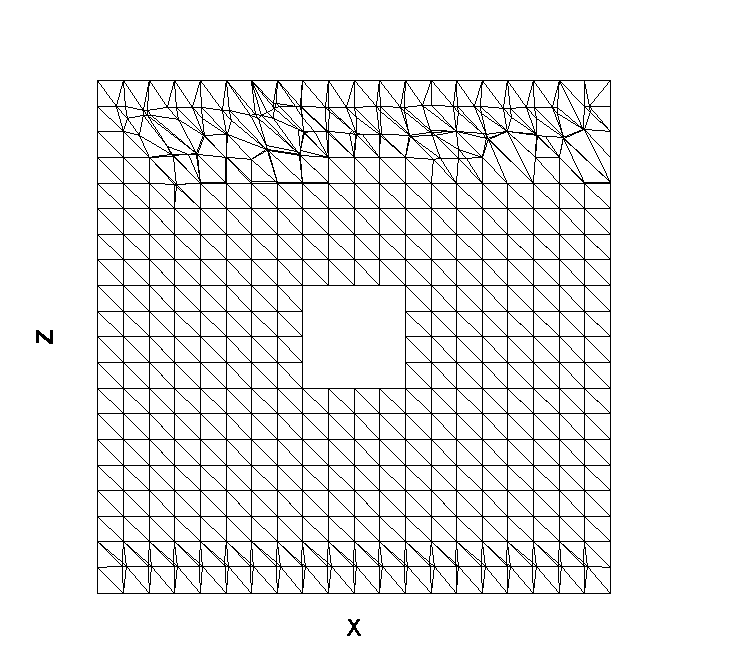}
\centerline{(c): $\M_{DMP}$ mesh, $N=47,454$}
\end{minipage}
\hspace{1mm}
\begin{minipage}[b]{2.5in}
\includegraphics[scale=0.25]{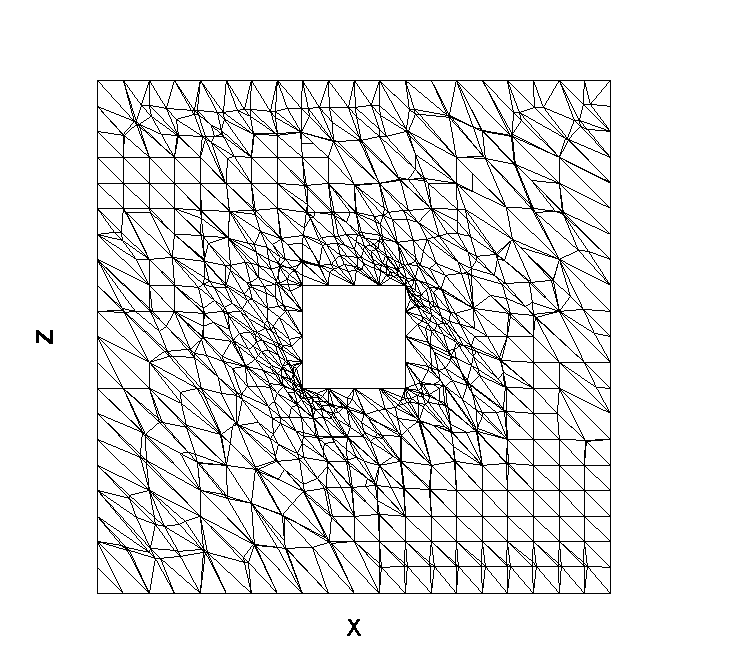}
\centerline{(d): $\M_{DMP+adap}$ mesh, $N=48693$}
\end{minipage}
}
\caption{Example \ref{ex2} Case 2. Planar view of meshes at cross-section $z=y$ for $\theta=3\pi/4$.
Notice that a planar cut of a 3D mesh does not necessarily form a 2D mesh.}
\label{ex2-mesh2}
\end{figure}

\begin{table}[hbt]
\caption{Minimal solution values and mesh quality measures using different meshes for Example \ref{ex2} Case 2.}
\vspace{2pt}
\centering
\begin{tabular}{c|r|l|c|r|c}
\hline \hline
 Mesh & $N$ & $u_{min}$ & $||Q_{ali}||_{L^2}$ & $||Q_{ali}||_{L^{\infty}}$ & $||Q_{eq}||_{L^2}$ \\
\hline
 & 744 & 0 & 7.15 & 9.42 & 1.00  \\
$\M_{id}$ & 5,952 & -4.82e-3 & 7.15 & 9.42 & 1.00 \\
 mesh & 47,616 & -3.38e-4 & 7.15 & 9.42 & 1.00 \\
& 380,928 & -1.16e-5 & 7.15 & 9.42 & 1.00 \\
& 3,047,424 & -5.10e-7 & 7.15 & 9.42 & 1.00 \\
\hline 
 & 834 & -1.26e-1 & 3.87 & 22.66 & 1.24  \\
 $\M_{adap}$ & 5,485 & -3.10e-3 & 4.99 & 42.44 & 1.34 \\
mesh & 37,899 & -2.46e-3 & 4.42 & 35.13 & 1.29 \\
& 314,228 & -7.62e-4 & 12.1 & 291.4 & 1.10 \\
& 2,659,107 & -8.26e-5 & 13.2 & 491.3 & 1.18 \\
\hline
 & 834 & 0 & 3.46 & 14.5 & 1.08  \\
$\M_{DMP}$ & 6,052 & 0 & 2.43 & 15.08 & 1.03 \\
 mesh & 47,454 & 0 & 2.08 & 15.3 & 1.02 \\
& 378,787 & 0 & 1.93 & 22.2 & 1.01 \\
& 3,035,251 & 0 & 1.87 & 24.0 & 1.00 \\
\hline
 & 744 & 0 & 3.46 & 9.42 & 1.10  \\
$\M_{DMP+adap}$ & 6,284 & 0 & 2.58 & 9.42 & 1.16 \\
 mesh & 48,693 & 0 & 2.32 & 38.09 & 1.11 \\
& 428,732 & 0 & 2.09 & 167.9 & 1.10 \\
& 3,576,978 & 0 & 1.94 & 5790 & 1.07 \\
\hline \hline
\end{tabular}
\label{ex2-result2}
\end{table}

\vspace{10pt}

\textbf{ Case 3.} $\theta=\pi/4$, $k_1=1000$, $k_2=1$, $k_3=1$. In this case, the diffusion is much faster
in the direction of $\theta=\pi/4$ than in other directions. Fig.~\ref{ex2-soln3} shows the numerical solution using a fine $M_{id}$ mesh with 5,952,000 tetrahedra. A planar view of four different meshes at cross-section $z=y$ is shown
in Fig.~\ref{ex2-mesh3}. The results of $u_{min}$ and mesh quality measures are reported in Table~\ref{ex2-result3}.

Overall, all but the $\M_{id}$ mesh are close to being $\M$-uniform, with relatively small $\|Q_{ali}\|_{L^2}$
and  $\|Q_{eq}\|_{L^2}$. But they do not satisfy (\ref{meshcnd-5}) so there is no guarantee that the solutions
will satisfy MP. Indeed, the numerical solutions obtained from all four meshes violate MP except
for fine $\M_{DMP}$ and $\M_{DMP+adap}$ meshes. As can be seen from Table \ref{ex2-result3}, 
$\M_{DMP}$ and $M_{DMP+adap}$ meshes improve the MP satisfaction of the numerical solution.
For the numerical solution obtained using the $\M_{id}$ mesh with $N=380,928$, it has the minimal value
$u_{min}=-7.63 \times 10^{-2}$, and for the $\M_{adap}$ mesh with $N=315,947$, $u_{min}=-5.79 \times 10^{-3}$.
On the other hand, the numerical solutions obtained using the $\M_{DMP}$ mesh with $N=310,147$
and the $\M_{DMP+adap}$ mesh with $N=396,625$ already satisfy MP. 

\end{exam}

\begin{figure}[hbt]
\centering
\includegraphics[scale=0.25]{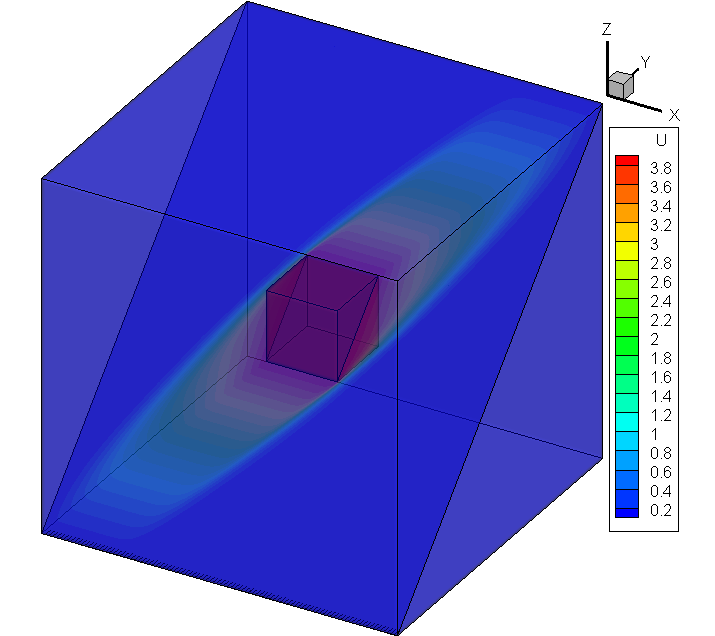}
\caption{Example \ref{ex2} Case 3. Numerical solution in the domain $\Omega$ and on the cross-section $z=y$.}
\label{ex2-soln3}
\end{figure}

\begin{figure}[hbt]
\centering
\hbox{
\hspace{1mm}
\begin{minipage}[b]{2.5in}
\includegraphics[scale=0.25]{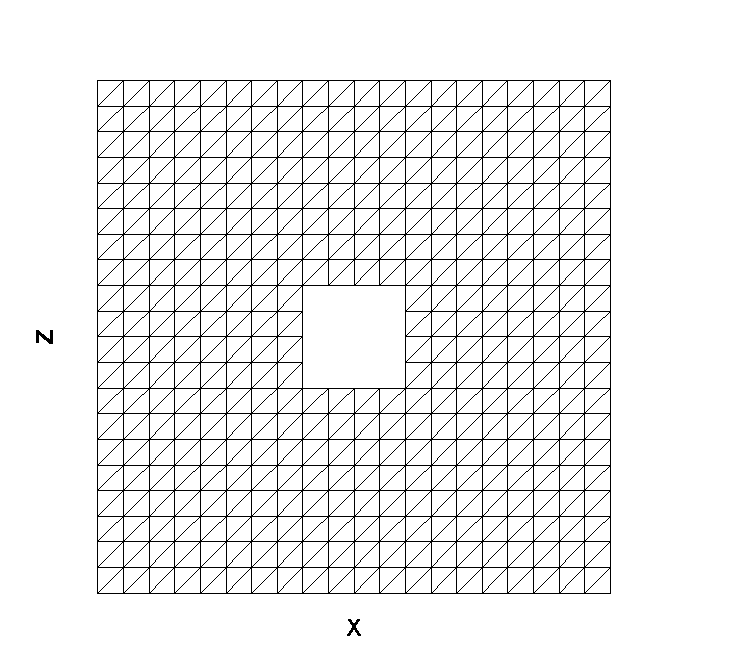}
\centerline{(a): $\M_{id}$ mesh, $N=47,616$}
\end{minipage}
\hspace{1mm}
\begin{minipage}[b]{2.5in}
\includegraphics[scale=0.25]{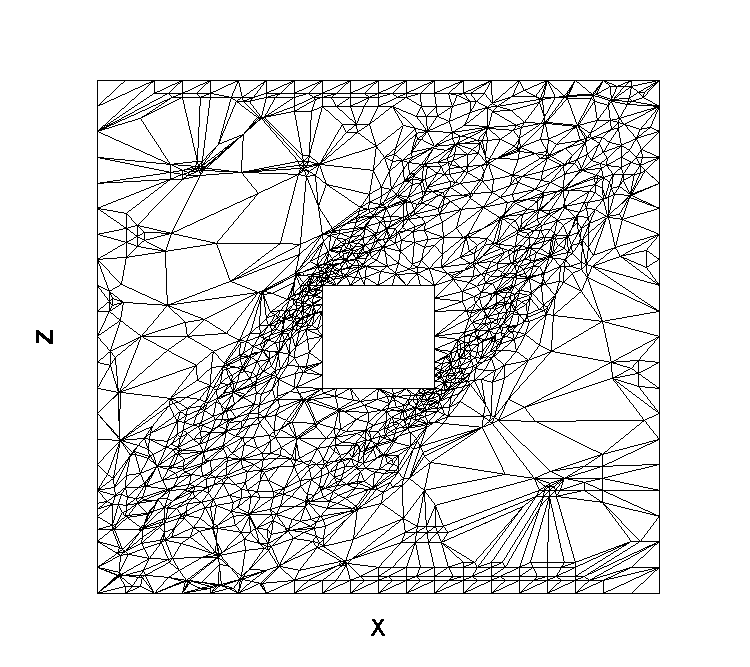}
\centerline{(b): $\M_{adap}$ mesh, $N=46,334$}
\end{minipage}
}
\hbox{
\hspace{1mm}
\begin{minipage}[b]{2.5in}
\includegraphics[scale=0.25]{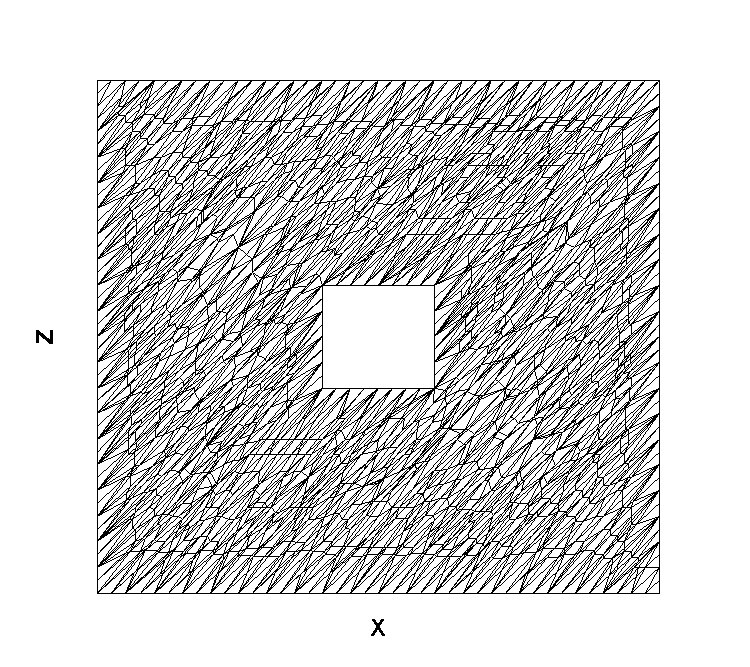}
\centerline{(c): $\M_{DMP}$ mesh, $N=48,925$}
\end{minipage}
\hspace{1mm}
\begin{minipage}[b]{2.5in}
\includegraphics[scale=0.25]{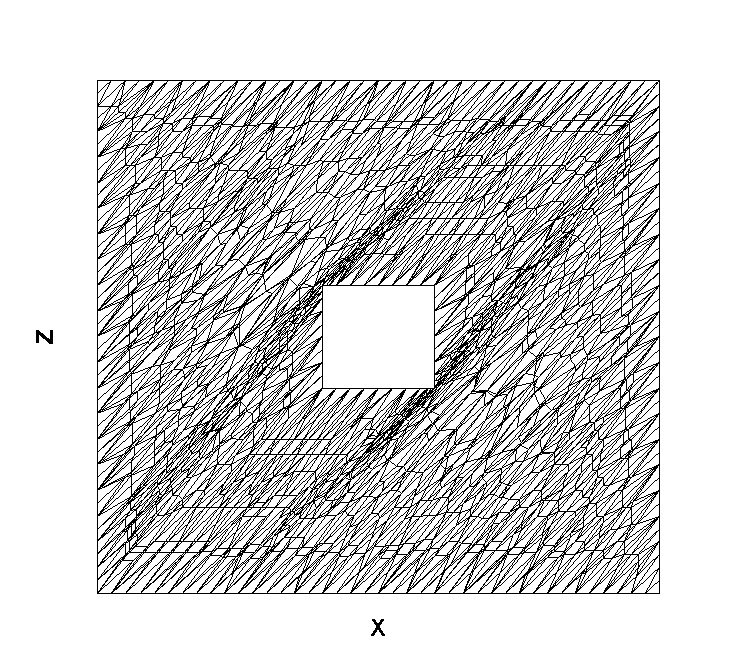}
\centerline{(d): $\M_{DMP+adap}$ mesh, $N=48,267$}
\end{minipage}
}
\caption{Example \ref{ex2} Case 3. Planar view of meshes at cross-section $z=y$
for $\theta=\pi/4$, $k_1=1000$, $k_2=1$, and $k_3=1$.
Notice that a planar cut of a 3D mesh does not necessarily form a 2D mesh.
}
\label{ex2-mesh3}
\end{figure}

\begin{table}[hbt]
\caption{Minimal solution values and mesh qualities using different meshes for Example \ref{ex2} Case 3.}
\vspace{2pt}
\centering
\begin{tabular}{c|r|l|c|r|c}
\hline \hline
 Mesh & $N$ & $u_{min}$ & $||Q_{ali}||_{L^2}$ & $||Q_{ali}||_{L^{\infty}}$ & $||Q_{eq}||_{L^2}$ \\
\hline
 & 744 & -6.15e-2 & 46.04 & 49.99 & 1.00  \\
$\M_{id}$ & 5,952 & -1.09e-1 & 46.04 & 49.99 & 1.00 \\
 mesh & 47,616 & -1.14e-1 & 46.04 & 49.99 & 1.00 \\
& 380,928 & -7.63e-2 & 46.04 & 49.99 & 1.00 \\
& 3,047,424 & -3.78e-2 & 46.04 & 49.99 & 1.00 \\
\hline 
 & 924 & -8.04e-2 & 3.72 & 23.43 & 1.29  \\
 $\M_{adap}$ & 5,137 & -3.38e-2 & 4.37 & 24.22 & 1.18 \\
mesh & 46,334 & -2.08e-2 & 5.00 & 50.52 & 1.32 \\
& 315,947 & -5.79e-3 & 4.65 & 73.36 & 1.42 \\
& 3,292,289 & -1.53e-3 & 3.67 & 135.3 & 1.15 \\
\hline
 & 2,995 & -1.35e-1 & 12.05 & 49.99 & 1.32  \\
$\M_{DMP}$ & 6,203 & -8.45e-2 & 6.88 & 94.55 & 1.55 \\
 mesh & 48,925 & -5.03e-9 & 3.86 & 110.9 & 1.38 \\
& 310,147 & 0 & 2.56 & 49.99 & 1.19 \\
& 2,064,353 & 0 & 2.42 & 49.99 & 1.20 \\
\hline
 & 1,898 & 0 & 10.34 & 49.99 & 1.74  \\
$\M_{DMP+adap}$ & 6,919 & -2.67e-3 & 6.90 & 54.49 & 1.36 \\
 mesh & 48,267 & -1.01e-7 & 3.77 & 63.01 & 1.28 \\
& 396,625 & 0 & 2.78 & 78.36 & 1.22 \\
& 2,716,462 & 0 & 2.43 & 54.49 & 1.48 \\
\hline \hline
\end{tabular}
\label{ex2-result3}
\end{table}

% section 5
\section{Application to fractured reservoir simulation in petroleum engineering}
\label{sec-app}

Numerical simulation plays an important role in petroleum engineering to predict production rate,
optimize hydraulic fracturing design, and evaluate enhanced oil recovery processes.
In those computations, the mesh has to be sufficiently refined around wellbore or fractures
to accurately represent the flow effects thereon because the fractures are significantly smaller but with much higher permeability comparing to the reservoir matrix. It is challenging to consider the mesh inside a fracture due to its small scale.

In recent years, shale gas reservoirs have become an attractive source for natural gas production largely due to the advancement of horizontal drilling and hydraulic fracturing techniques \cite{WCJR14}.
Shale reservoirs typically have extremely low permeability. For example, the representative
permeability is $1.0 \times 10^{-4}$ millidarcies (mD) in the Barnett shale but is $5.0 \times 10^4$ mD
in fractures \cite{OFMB12}. The high permeability in fractures makes it possible to produce the gas
from the shale while it makes the reservoir highly heterogeneous and anisotropic.
The complexity of fracture network together with the complexity of shale pore structure
makes shale-gas reservoir simulation a very challenging task. Some researchers
focus on fluid flow models in nano-scale shale pores \cite{CLER10, LL14, MJL14, SK12}
and some other focus on different models of fracture network in the reservoir
\cite{CLET09, FMIB13, OFMB12, Rubin10}. 

As our first exploration of the use of anisotropic mesh adaptation in fractured reservoir simulation,
we consider the steady-state fluid flow that can be applied to effective permeability calculation \cite{BMT03,VPS13}. For simplicity, we consider incompressible single-phase fluid flow and choose the single porosity dual permeability model. Darcy's Law is chosen to describe the flow in both matrix and fractures with different effective permeability in the corresponding regions. We also assume that permeability does not change with pressure. Under this setting, we can perform mesh adaptation in both the matrix and the fractures.

We consider a reservoir with 3000 ft in $x$-direction, 1000 ft in $y$-direction, and 300 ft in $z$-direction,
and denote the domain as $\Omega$. Fig.~\ref{ex3-domain} shows the sketch of the half-panel of the reservoir
($0 \le y \le 500$). A horizontal well is located in the center of the reservoir with length $L_{w}=1400$ ft
along $x$-direction. The radius of the wellbore is 0.3 ft. The reservoir pressure is $P_{r}=3800$ psi and
the pressure in the wellbore is $P_{w}=1000$ psi. There are three fractures with half-length $L_{f}=400$ ft
and width $W_{f}=0.01$ ft at the location $x=800$ ft, $x=1500$ ft, and $x=2200$ ft, respectively. 
The angle between the fractures and the positive $x$-axis is $\pi/4$. 
The height of the fractures is the same as the height of the reservoir. The subdomain formed by
the fractures is denoted as $\Omega_f$. The permeability in the matrix is $k_{mpar}=0.1$ mD in the direction 
parallel to the fractures, is $k_{mperp}=5$ mD in the direction perpendicular to the the fractures on $xy$-plane, and is $k_{mpar}$ in the direction of $z$-axis. 
The permeability in the fractures is $k_{pf}=10^4$ mD along the fractures and is considered the same as that permeability in the matrix
in other directions. For successful computation, a common technique is to scale up the width of the fracture while keeping the same conductivity, that is, $K_{pf}W_{f}=100$ mD-ft for this example. In our computations, the width of each fracture is scaled up to $W_{f}=10$ ft and the permeability is reduced to $K_{pf}=10$ mD. Outside of the fractures, the primary diffusion direction in the matrix is perpendicular to the fractures.

\begin{figure}[htb]
\centering
\begin{tikzpicture}[scale=0.3]
\draw[->] (0.,0.) -- (33.,0.);
\draw[->] (0.,0.) -- (0.,12.);
\draw[->] [dash pattern=on 8pt off 8pt] (0.,0.) -- (13.,13.);
%\clip(0.,0.) rectangle (40.,20.);
\draw (0.,8.)-- (30.,8.);
\draw (30.,8.)-- (40.,18.);
\draw (40.,18.)-- (10.,18.);
\draw (10.,18.)-- (0.,8.);
\draw (0.,0.)-- (30.,0.);
\draw (30.,0.)-- (40.,10.);
\draw [dash pattern=on 8pt off 8pt] (40.,10.)-- (10.,10.);
\draw [dash pattern=on 8pt off 8pt] (10.,10.)-- (0.,0.);
\draw (0.,8.)-- (0.,0.);
\draw (30.,8.)-- (30.,0.);
\draw (40.,18.)-- (40.,10.);
\draw [dash pattern=on 8pt off 8pt] (10.,18.)-- (10.,10.);
\draw (7.5,0.)-- (7.5,8.);
\draw (7.5,8.)-- (15.5,13.);
\draw [dash pattern=on 8pt off 8pt] (15.5,13.)-- (15.5,5.);
\draw [dash pattern=on 8pt off 8pt] (15.5,5.)-- (7.5,0.);
\draw (8.5,0.)-- (8.5,8.);
\draw (8.5,8.)-- (16.5,13.);
\draw [dash pattern=on 8pt off 8pt] (16.5,13.)-- (16.5,5.);
\draw [dash pattern=on 8pt off 8pt] (16.5,5.)-- (8.5,0.);
\draw (15.5,13.)-- (16.5,13.);
\draw [dash pattern=on 8pt off 8pt] (15.5,5.)-- (16.5,5.);
\draw (14.5,0.)-- (14.5,8.);
\draw (14.5,8.)-- (22.5,13.);
\draw [dash pattern=on 8pt off 8pt] (22.5,13.)-- (22.5,5.);
\draw [dash pattern=on 8pt off 8pt] (22.5,5.)-- (14.5,0.);
\draw (15.5,0.)-- (15.5,8.);
\draw (15.5,8.)-- (23.5,13.);
\draw [dash pattern=on 8pt off 8pt] (23.5,13.)-- (23.5,5.);
\draw [dash pattern=on 8pt off 8pt] (23.5,5.)-- (15.5,0.);
\draw (22.5,13.)-- (23.5,13.);
\draw [dash pattern=on 8pt off 8pt] (22.5,5.)-- (23.5,5.);
\draw (21.5,0.)-- (21.5,8.);
\draw (21.5,8.)-- (29.5,13.);
\draw [dash pattern=on 8pt off 8pt] (29.5,13.)-- (29.5,5.);
\draw [dash pattern=on 8pt off 8pt] (29.5,5.)-- (21.5,0.);
\draw (22.5,0.)-- (22.5,8.);
\draw (22.5,8.)-- (30.5,13.);
\draw [dash pattern=on 8pt off 8pt] (30.5,13.)-- (30.5,5.);
\draw [dash pattern=on 8pt off 8pt] (30.5,5.)-- (22.5,0.);
\draw (29.5,13.)-- (30.5,13.);
\draw [dash pattern=on 8pt off 8pt] (29.5,5.)-- (30.5,5.);
\draw (33,0) node[right] {$x$};
\draw (0,12) node[left] {$z$};
\draw (13,13) node[left] {$y$};
\draw (0,0) node[below] {$0$};
\draw (30,0) node[below] {$3000$};
\draw (0,8) node[left] {$300$};
\draw (9.5,10) node[left] {$500$};
\draw (8,0) node[below] {$800$};
\draw (15,0) node[below] {$150$};
\draw (22,0) node[below] {$2200$};
\draw (5,10) node {$\Gamma_1$};
\draw (32,12) node {$\Gamma_1$};
\draw (35,8) node {$\Gamma_1$};
\draw (6,2) node {$\Gamma_2$};
\draw (13,1) node {$\Gamma_2$};
\draw (20,1) node {$\Gamma_2$};
\draw (28,2) node {$\Gamma_2$};
\draw (19,7) node[right] {$\Gamma_3$};
\draw (18,14) node[right] {$\Gamma_3$};
\draw (7.2,4) node[right] {$\Gamma_4$};
\draw (14.2,4) node[right] {$\Gamma_4$};
\draw (21.2,4) node[right] {$\Gamma_4$};
\end{tikzpicture}
\caption{Fractured reservoir simulation. The sketch of the half-panel of the reservoir where the angle between the fractures and the $x$-axis is $\pi/4$. $\Gamma_1$ consists of left side ($x=0$), right side ($x=3000$), and back side ($y=500$). $\Gamma_2$ consists of front side ($y=0$). $\Gamma_3$ consists of the bottom side ($z=0$) and top side ($z=300$). $\Gamma_4$ consists of the faces of the fractures on front side.}
\label{ex3-domain}
\end{figure}

The faces on the left side ($x=0$), right side ($x=3000$), and back side ($y=500$) are denoted as $\Gamma_1$.
$\Gamma_2$ denotes the front side ($y=0$), and $\Gamma_3$ denotes the bottom side ($z=0$) and
top side ($z=300$). $\Gamma_4$ denotes the faces of the fractures on the front side $\Gamma_2$.
The horizontal well is in the center of the face $\Gamma_2$ along the $x$-direction.
For simplicity, we assume that the pressure on the whole surface $\Gamma_4$ is the same as
the pressure in the wellbore $P_w=1000$ psi. The pressure on $\Gamma_1$ is always the same as
the reservoir pressure $P_r=3800$ psi. There is no flow through $\Gamma_2$ except in $\Gamma_4$, and
no flow through $\Gamma_3$ either. 
The mathematical formulation of the model is given by 
\beq
\label{ex-3}
\begin{cases}
 - \nabla \cdot (\mathbb{K} \, \nabla P)  =  0, & \quad \mbox{ in } \Omega \\
P = P_{r}, & \quad \mbox{ on } \Gamma_1 \\
P = P_{w}, & \quad \mbox{ on } \Gamma_4 \\
\frac{\partial P}{\partial \V{n}} = 0, & \quad \mbox{ on } (\Gamma_2 \backslash \Gamma_4) \cup \Gamma_3
\end{cases}
\eeq
with
\beq
\mathbb{K} = \begin{bmatrix} 7.5 & 2.5 & 0 \\ 2.5 & 7.5 & 0 \\ 0 & 0 & 0.1 \end{bmatrix} \mbox{ in } \Omega_f,\quad
\mbox{ and } \quad 
\mathbb{K} = \begin{bmatrix} 2.55 & -2.45 & 0 \\ -2.45 & 2.55 & 0 \\ 0 & 0 & 0.1 \end{bmatrix} \mbox{ in } \Omega \backslash \Omega_f, 
\eeq
where the viscosity of the fluid and the porosity of the matrix are taken out from the equation from the assumption that they are independent of pressure. Gravitational effects are also ignored. The numerical solution with a fine mesh of $2,804,175$ tetrahedra is shown in Fig.~\ref{ex3-soln}, where the mesh is manually refined around the fractures.

\begin{figure}[hbt]
\centering
\includegraphics[scale=0.4]{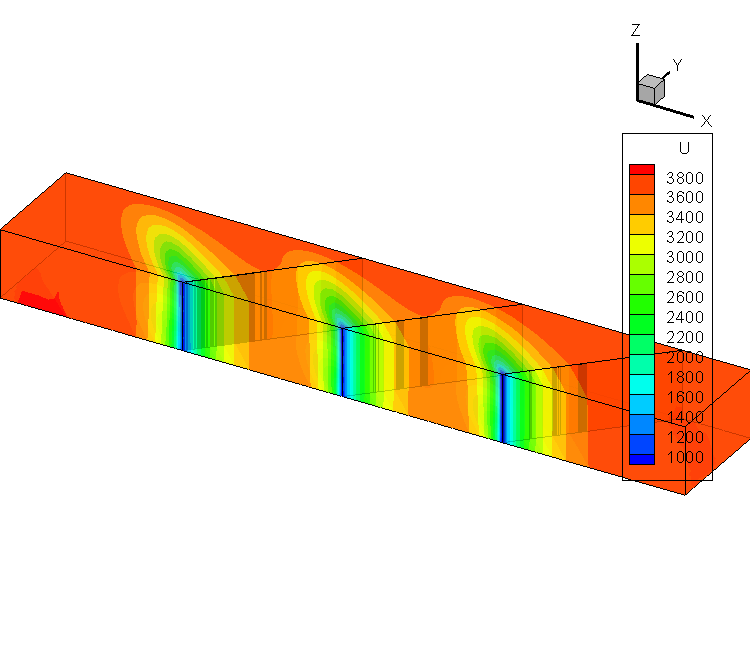}
\caption{Fractured reservoir simulation. Numerical solution in the domain $\Omega$ and on the cross-sections along the fractures at $x=800$, $x=1500$, and $x=2200$.}
\label{ex3-soln}
\end{figure}

The $xy$-planar view of four different meshes $\M_{id}$, $\M_{adap}$, $\M_{DMP}$, and $\M_{DMP+adap}$ at cross-section $z=100$ ft are shown
in Fig.~\ref{ex3-mesh}. In this example, the $\M_{id}$ mesh is the initial mesh that has manual local refinement around the fractures. The numerical solutions obtained from $\M_{id}$ and $\M_{adap}$ meshes have pressure values
larger than $3800$ psi inside the domain, which violates MP and causes spurious back flow as shown in
Fig.~\ref{ex3-soln-2}. The red shaded regions indicate unphysical pressure values.
The numerical solutions obtained using $\M_{DMP}$ and $\M_{DMP+adap}$ meshes both satisfy MP,
and the gradients of pressure are shown in Fig.~\ref{ex3-soln-2b}.

\begin{figure}[thb]
\centering
\hbox{
\hspace{1mm}
\begin{minipage}[b]{3in}
\includegraphics[scale=0.3]{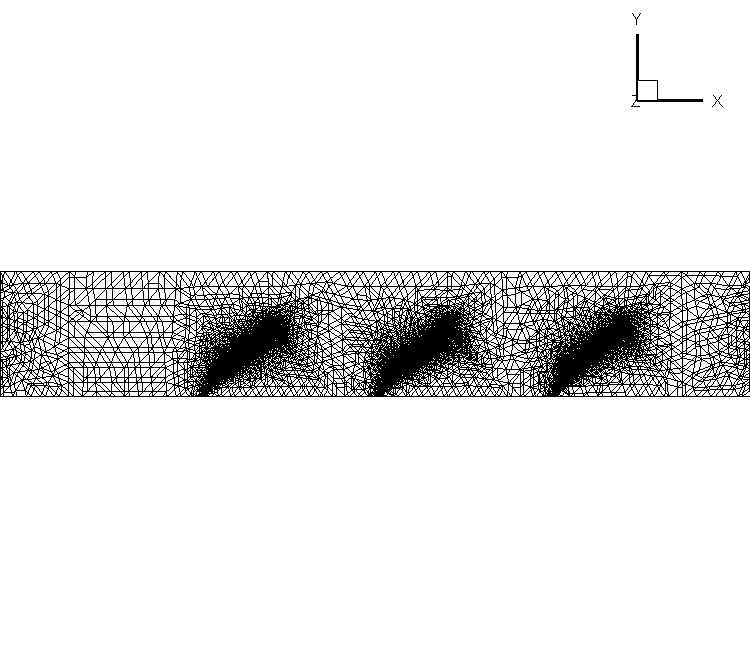}
\centerline{(a): $\M_{id}$ mesh, $N=347,760$}
\end{minipage}
\hspace{5mm}
\begin{minipage}[b]{3in}
\includegraphics[scale=0.3]{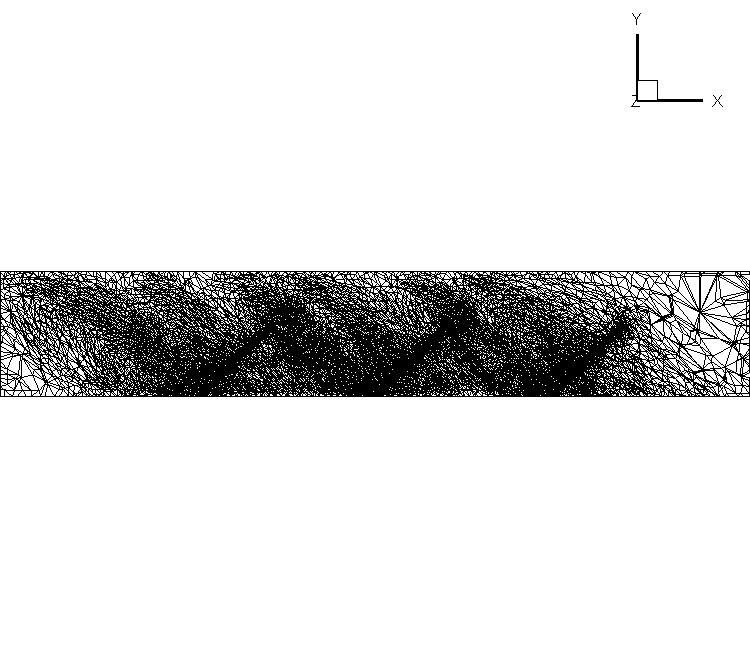}
\centerline{(b): $\M_{adap}$ mesh, $N=331,134$}
\end{minipage}
}
\vspace{5mm}
\hbox{
\hspace{1mm}
\begin{minipage}[b]{3in}
\includegraphics[scale=0.3]{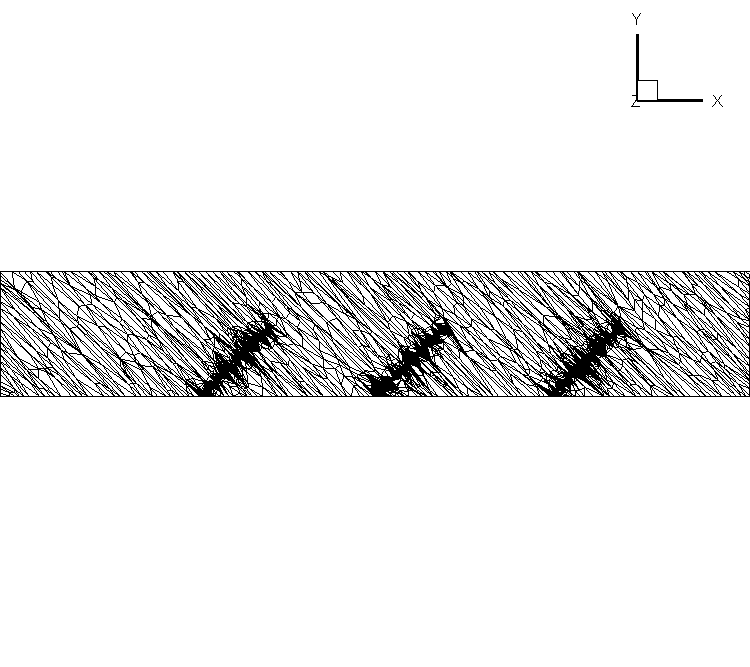}
\centerline{(c): $\M_{DMP}$ mesh, $N=351,206$}
\end{minipage}
\hspace{5mm}
\begin{minipage}[b]{3in}
\includegraphics[scale=0.3]{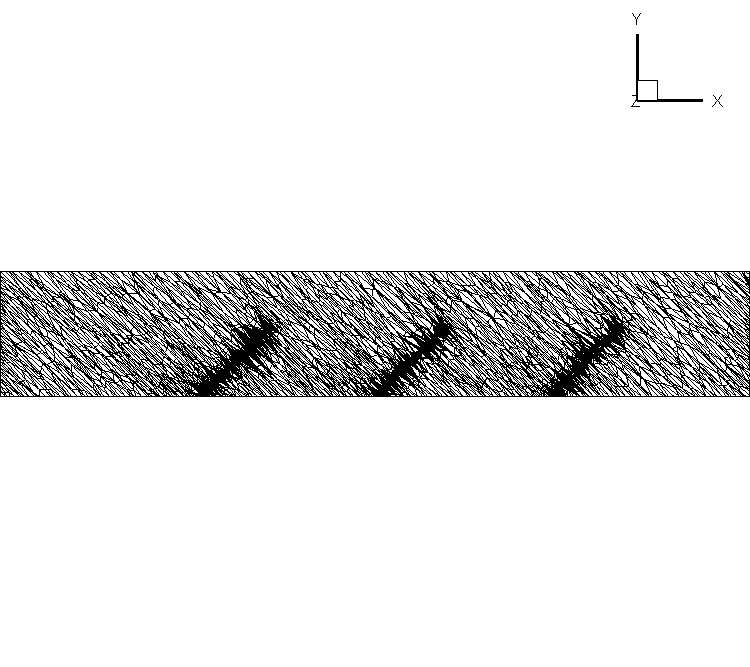}
\centerline{(d): $\M_{DMP+adap}$ mesh, $N=334,547$}
\end{minipage}
}
\caption{Fractured reservoir simulation. Different meshes at cross-section $z=100$ ft.}
\label{ex3-mesh}
\end{figure}

\begin{figure}[thb]
\centering
\hbox{
\hspace{1mm}
\begin{minipage}[b]{3in}
\includegraphics[scale=0.25]{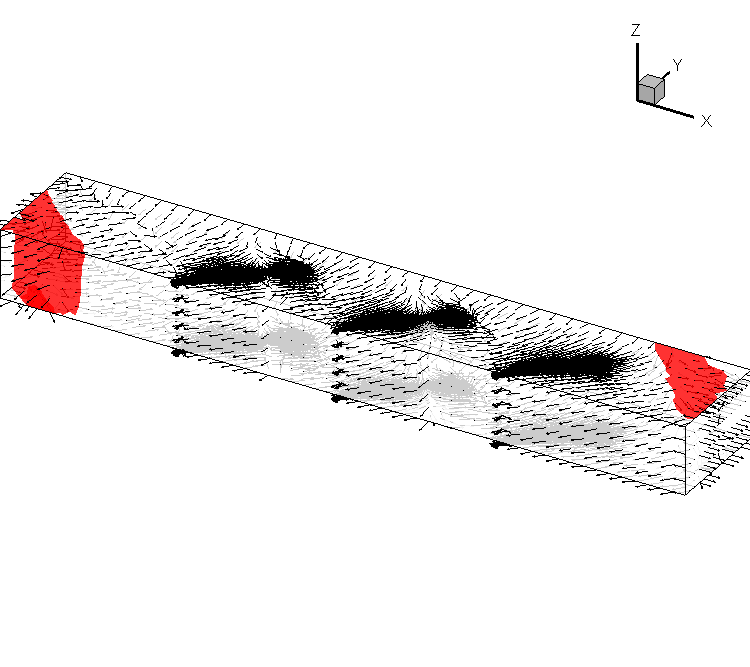}
\centerline{(a): $\M_{id}$ mesh, $P_{max}=3821.57$}
\end{minipage}
\hspace{1mm}
\begin{minipage}[b]{3in}
\includegraphics[scale=0.25]{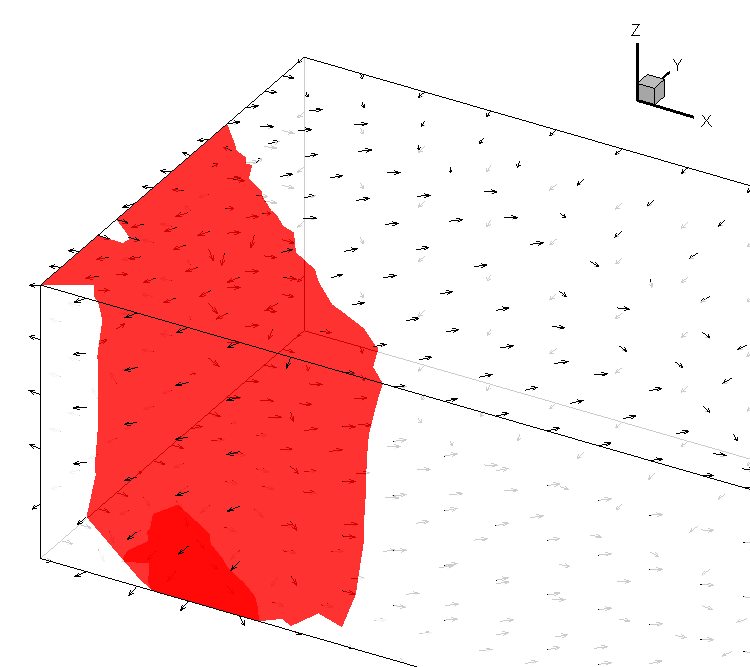}
\centerline{(b): Closer view of (a) at $x=0$ ft}
\end{minipage}
}
\vspace{5mm}
\hbox{
\hspace{1mm}
\begin{minipage}[b]{3in}
\includegraphics[scale=0.25]{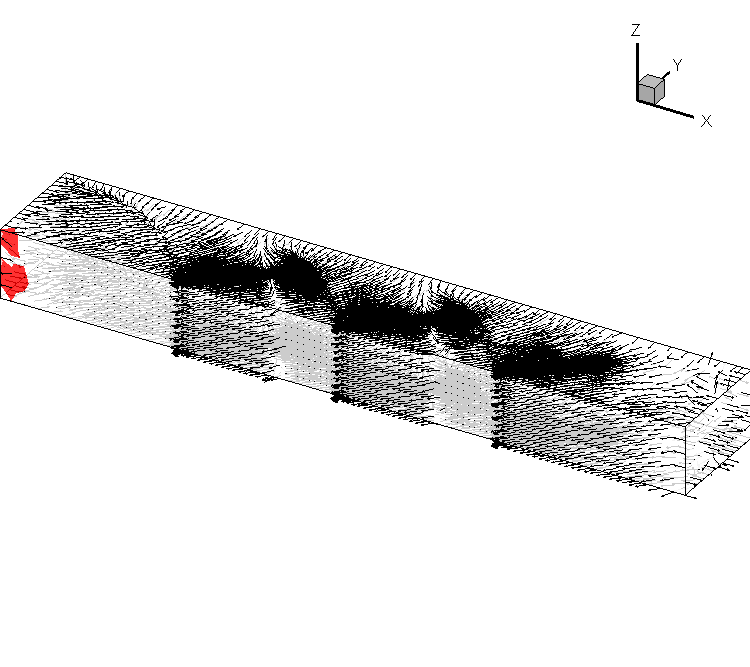}
\centerline{(c): $\M_{adap}$ mesh, $P_{max}=3800.07$}
\end{minipage}
\hspace{1mm}
\begin{minipage}[b]{3in}
\includegraphics[scale=0.25]{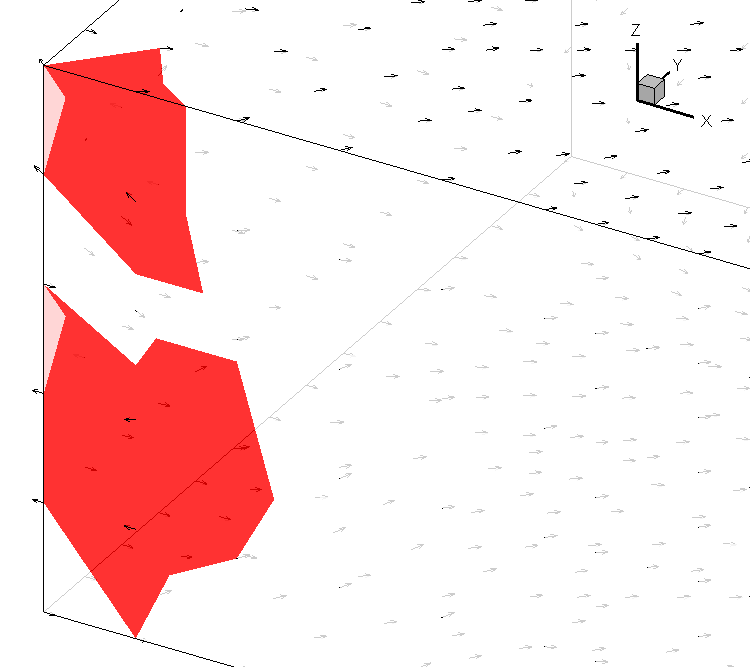}
\centerline{(d): Closer view of (c) at $x=0$ ft}
\end{minipage}
}
\caption{Fractured reservoir simulation. Pressure gradients obtained using $\M_{id}$ and $M_{adap}$ meshes. The red shaded region have unphysical pressure values that are larger than the reservoir pressure.}
\label{ex3-soln-2}
\end{figure}

\begin{figure}[thb]
\centering
\hbox{
\hspace{1mm}
\begin{minipage}[b]{3in}
\includegraphics[scale=0.25]{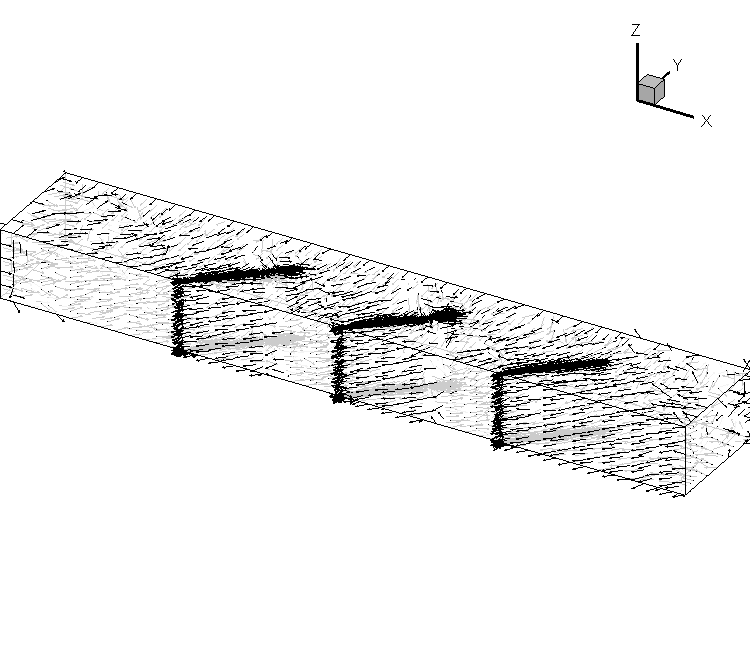}
\centerline{(a): $\M_{DMP+adap}$ mesh, $P_{max}=3800$}
\end{minipage}
\hspace{1mm}
\begin{minipage}[b]{3in}
\includegraphics[scale=0.25]{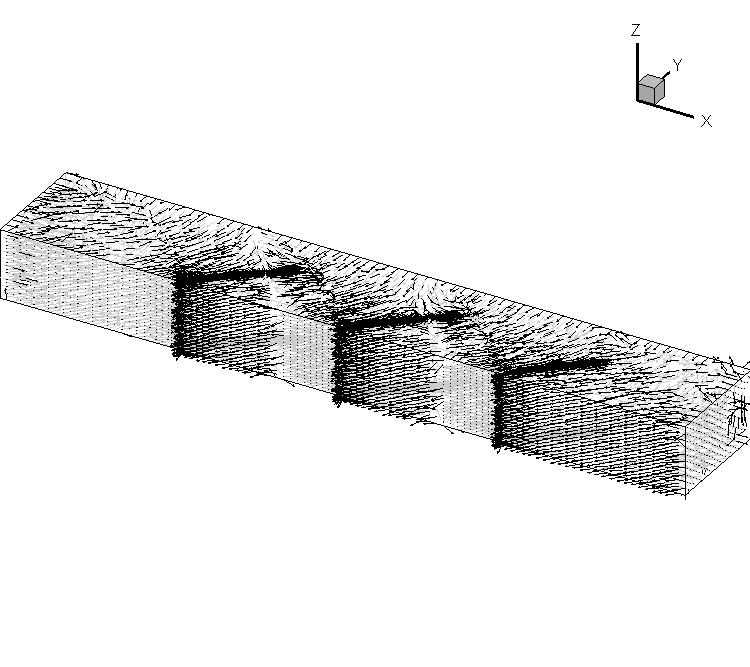}
\centerline{(b): $\M_{DMP+adap}$ mesh, $P_{max}=3800$}
\end{minipage}
}
\caption{Fractured reservoir simulation. Pressure gradients obtained using $\M_{DMP}$ and $\M_{DMP+adap}$ mesh. No unphysical pressure values are observed in the computed solution. }
\label{ex3-soln-2b}
\end{figure}

Different meshes at cross-sections along the fracture at $x=1500$ ft and around the fractures at cross-section of $z=100$ ft are shown in Figs.~\ref{ex3-mesh-f} and \ref{ex3-mesh-z}, respectively. $\M_{DMP}$ and $\M_{DMP+adap}$ meshes clearly are aligned along
the principal diffusion direction both in fracture and in the matrix (where the primary diffusion direction is perpendicular to fractures). 
The alignment of mesh elements helps balance the anisotropic diffusion,
and the numerical solutions obtained using both the $\M_{DMP}$ and $\M_{DMP+adap}$ meshes 
satisfy MP. $\M_{DMP+adap}$ mesh also shows a degree of element concentration around the fractures.
Table~\ref{ex3-result} lists the maximum pressure values for different meshes, which shows that improper meshes can lead to unphysical solutions in the fractured reservoir simulation.

\begin{figure}[thb]
\centering
\hbox{
\hspace{1mm}
\begin{minipage}[b]{3in}
\includegraphics[scale=0.25]{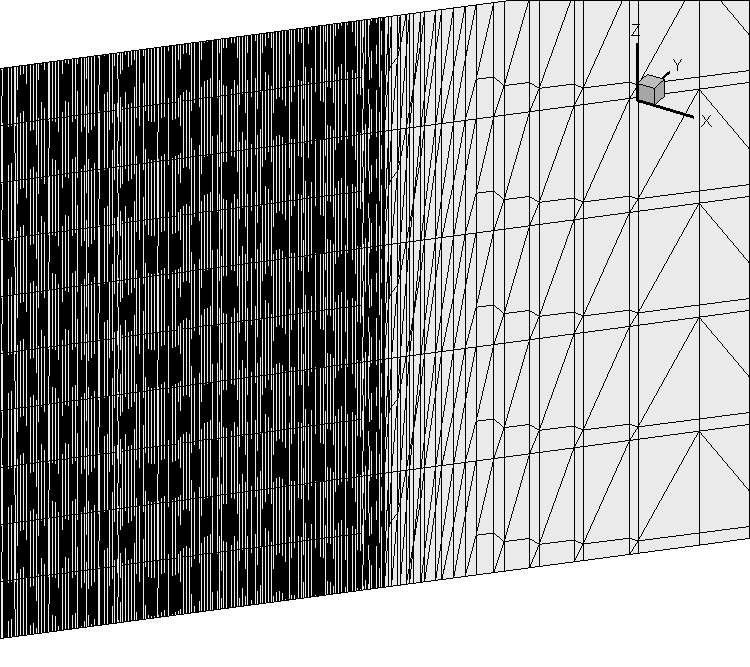}
\centerline{(a): $\M_{id}$ mesh}
\end{minipage}
\hspace{1mm}
\begin{minipage}[b]{3in}
\includegraphics[scale=0.25]{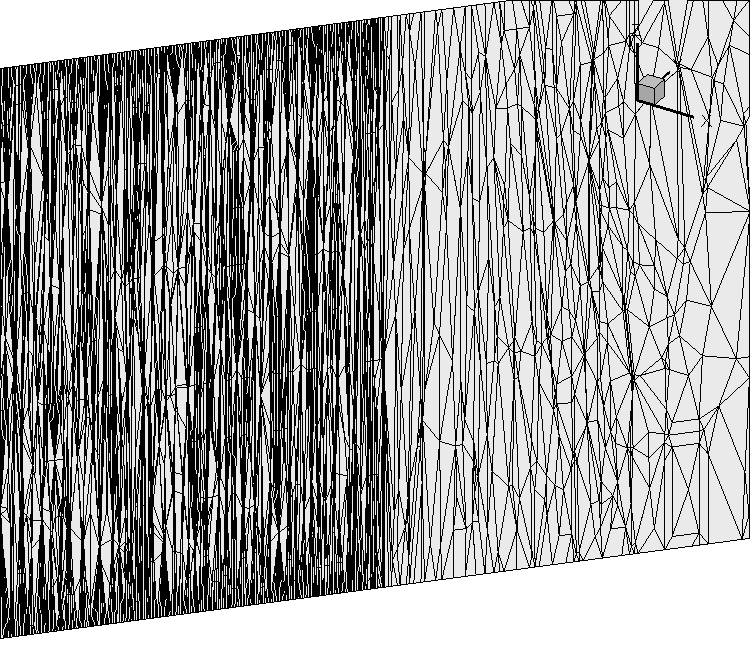}
\centerline{(b): $\M_{adap}$ mesh}
\end{minipage}
}
\vspace{5mm}
\hbox{
\hspace{1mm}
\begin{minipage}[b]{3in}
\includegraphics[scale=0.25]{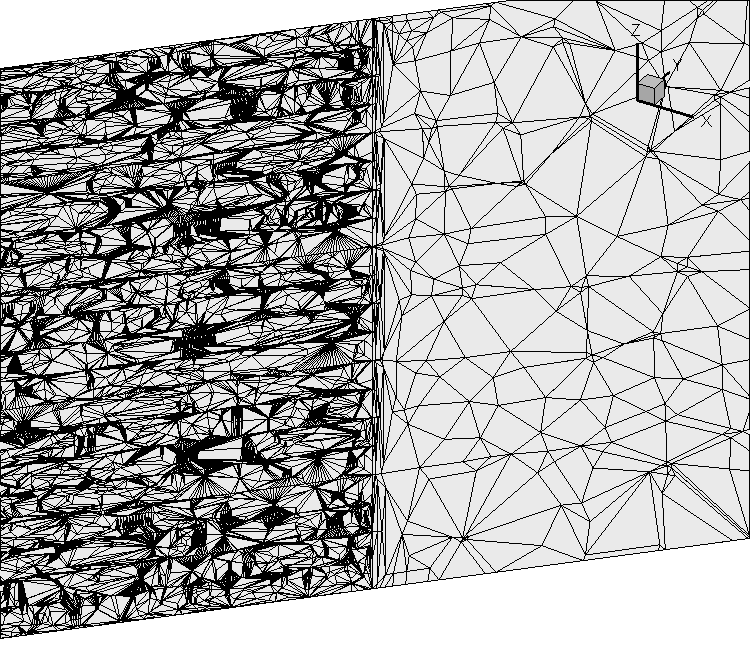}
\centerline{(c): $\M_{DMP}$ mesh}
\end{minipage}
\hspace{1mm}
\begin{minipage}[b]{3in}
\includegraphics[scale=0.25]{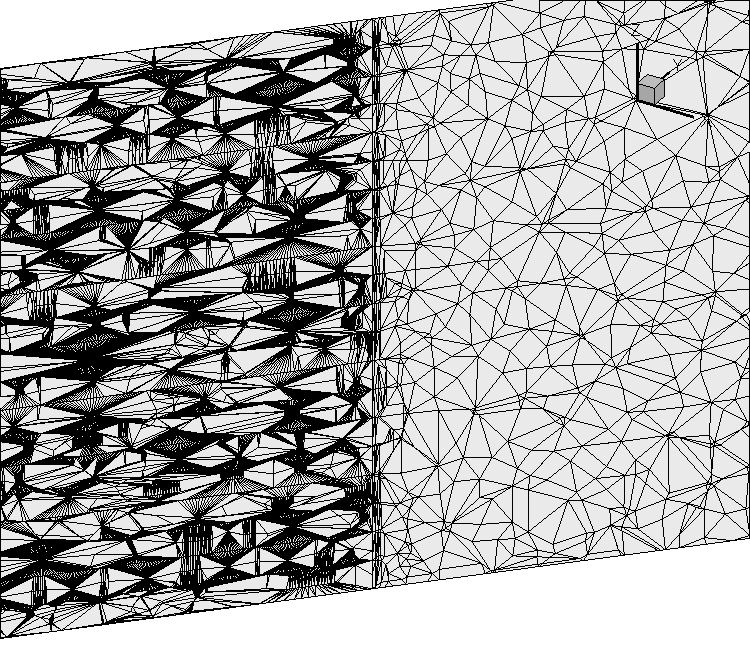}
\centerline{(d): $\M_{DMP+adap}$ mesh}
\end{minipage}
}
\caption{Fractured reservoir simulation. Different meshes at cross-section along the fracture at $x=1500$ ft.}
\label{ex3-mesh-f}
\end{figure}

\begin{figure}[thb]
\centering
\hbox{
\hspace{1mm}
\begin{minipage}[b]{3in}
\includegraphics[scale=0.25]{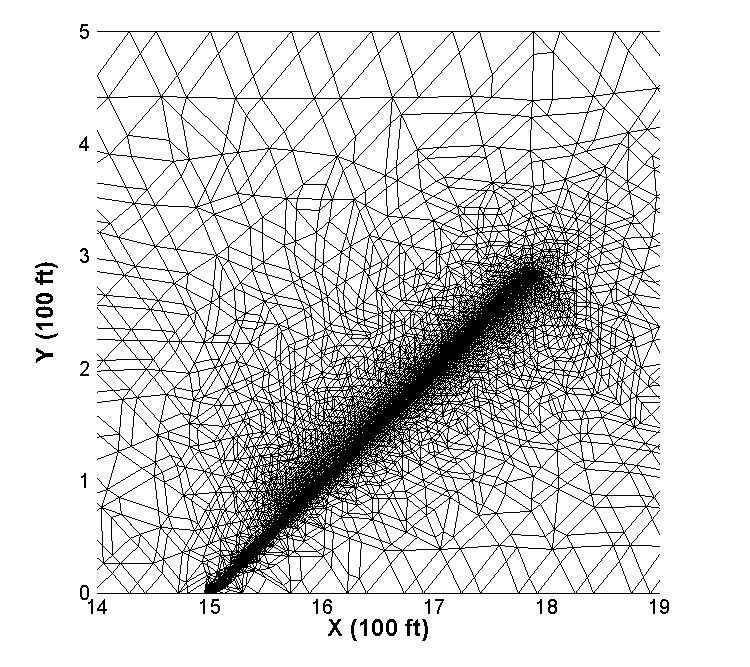}
\centerline{(a): $\M_{id}$ mesh}
\end{minipage}
\hspace{1mm}
\begin{minipage}[b]{3in}
\includegraphics[scale=0.25]{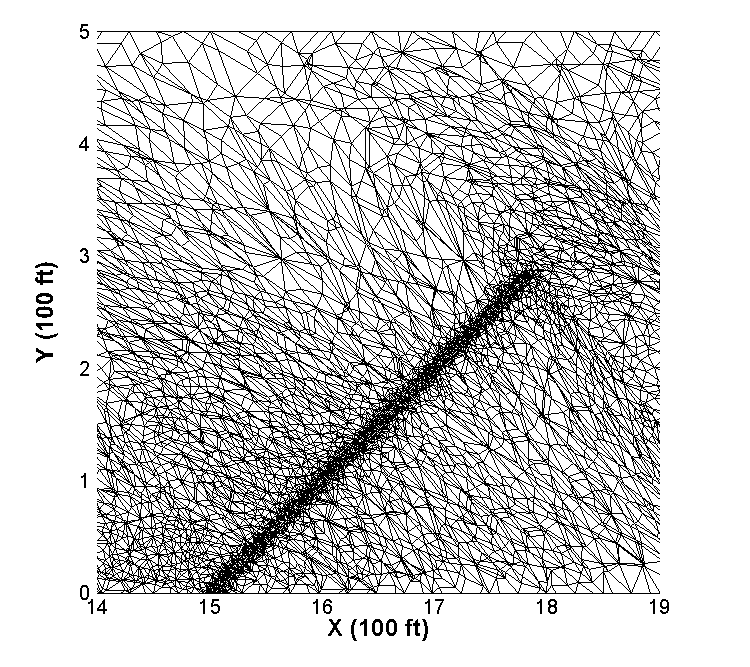}
\centerline{(b): $\M_{adap}$ mesh}
\end{minipage}
}
\vspace{5mm}
\hbox{
\hspace{1mm}
\begin{minipage}[b]{3in}
\includegraphics[scale=0.25]{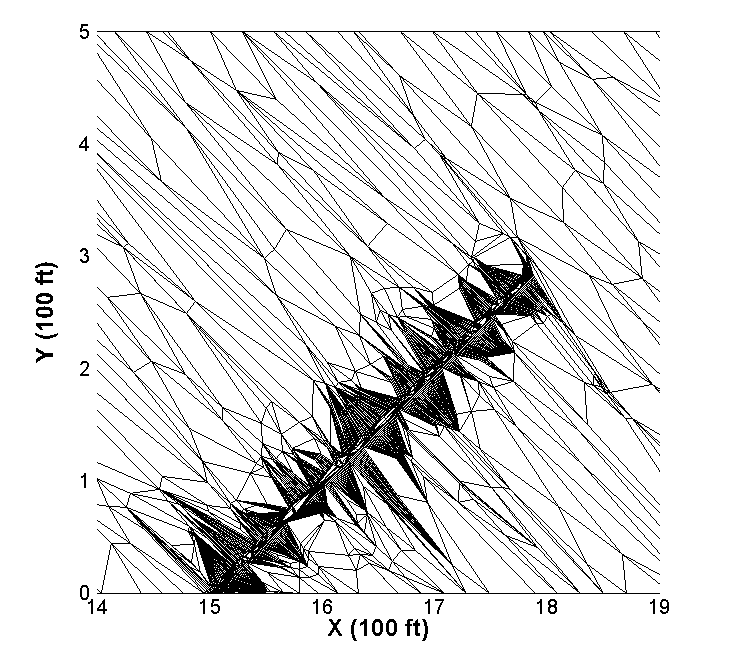}
\centerline{(c): $\M_{DMP}$ mesh}
\end{minipage}
\hspace{1mm}
\begin{minipage}[b]{3in}
\includegraphics[scale=0.25]{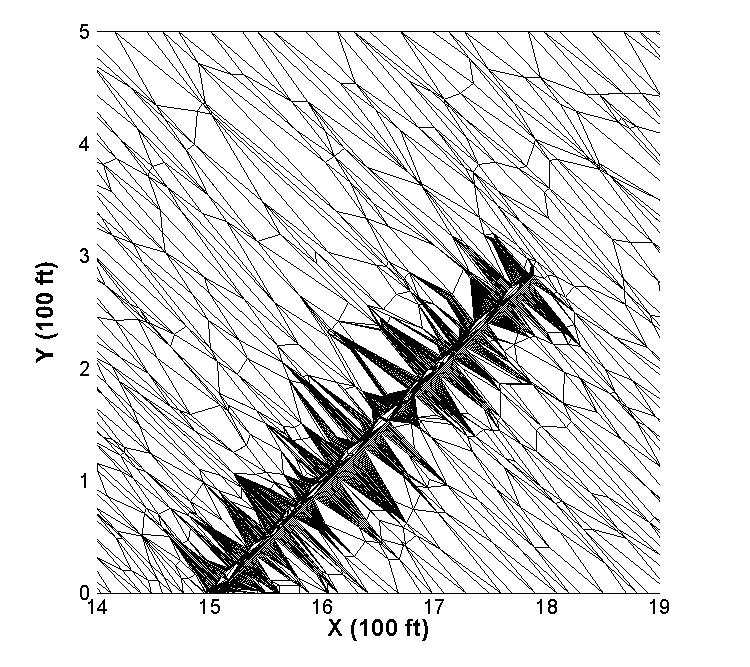}
\centerline{(d): $\M_{DMP+adap}$ mesh}
\end{minipage}
}
\caption{Fractured reservoir simulation. Different meshes at cross-section $z=150$ ft, $1400$ ft $\le x \le 1900$ ft.}
\label{ex3-mesh-z}
\end{figure}

\begin{table}[hbt]
\caption{Fractured reservoir simulation. Maximum pressure values obtained with different meshes.}
\vspace{2pt}
\centering
\begin{tabular}{c|r|l}
\hline \hline
 Mesh & $N$ & $P_{max}$ \\
\hline
$\M_{id}$ & 347,760 & 3821.57  \\
$\M_{adap}$ & 331,134 & 3800.07  \\
$\M_{DMP}$ & 351,206 & 3800  \\
$\M_{DMP+adap}$ & 334,547 & 3800  \\
\hline \hline
\end{tabular}
\label{ex3-result}
\end{table}

% section 6
\section{Conclusions and comments}
\label{sec-con}

In the previous sections we have studied anisotropic mesh adaptation and maximum principle preservation
for the finite element solution of three-dimensional anisotropic diffusion problems. We have defined a quality measure $Q_{ali}$ in \eqref{meas-ali} that provides a good guidance for MP satisfaction. And we have also developed some new sufficient conditions (cf. (\ref{meshcnd-3}), (\ref{meshcnd-5}), and (\ref{meas-ali3})) that guarantee the linear finite element solution to satisfy MP. Comparison has been performed for four different metric tensors that are used to generate adaptive meshes with the existing software MMG3D.

One of the metric tensors is the identity matrix which typically leads to uniform or almost uniform meshes.
The other three are related to the diffusion matrix and/or
the finite element solution. $\M_{adap}$ (\ref{M-adap}) is based on minimizing $H^1$ semi-norm of linear interpolation
error.  $\M_{adap}$ meshes concentrate elements near the regions where the Hessian of the solution is large,
and gives the smallest error. $\M_{DMP}$ (\ref{M-DMP}) is taken as the inverse of the diffusion matrix and
leads to meshes with elements aligned along the primary diffusion directions, which improves
the MP satisfaction of the finite element solution but gives the largest error among all of the considered metric tensors.
The last metric tensor, $\M_{DMP+adap}$ (\ref{M-DMP+adap}) is proportional to the inverse of the diffusion matrix,
with the coefficient of proportionality being taken as a function to minimize the $H^1$ semi-norm of linear
interpolation error. It provides a good balance between MP preservation ($\M_{DMP}$)
and mesh adaptivity ($\M_{adap}$). Meshes associated with this metric tensor improves the MP satisfaction
of the finite element solution while keeping the error minimal.

The anisotropic mesh adaptation procedure is applied to fractured reservoir simulation in petroleum engineering.
Numerical results show that $\M_{id}$ mesh (with manual adaptation around fractures) can lead to unphysical solutions, $\M_{adap}$ is capable of concentrating mesh elements around the fractures to reduce solution errors but may still violate MP, 
while both $\M_{DMP}$ and $\M_{DMP+adap}$ tend to align elements along the primary diffusion direction
where the permeability is significantly larger than those in other directions and produce no artificial oscillations
in the computed solution.

Overall, the numerical observations we made here for three dimensional problems are consistent with
those for two dimensions reported in \cite{LH10}. However, numerical experience suggests that
it is much harder in 3D to make the mesh to satisfy or closely satisfy the alignment condition (\ref{ali-1}).
This is especially true for the mesh adaptation situation (with the metric tensor depending on the solution)
for which $Q_{ali}$ is relatively large for some elements although $\|Q_{ali}\|_{L^2}$ (which is an average
of $Q_{ali}$) stays relatively small. How to generate better nearly $\M$-uniform meshes for a given metric tensor
in 3D has attracted interest from researchers (e.g. see \cite{Los14, MA14}) and certainly deserves more investigations.

\end{document}